\setlist[enumerate]{font=\textnormal}
\newcommand{\dd}{\operatorname{d\mkern-2.5mu}}
\let\deg\relax
\DeclareMathOperator{\deg}{deg}
\let\Re\relax
\DeclareMathOperator{\Re}{Re}
\let\Im\relax
\DeclareMathOperator{\Im}{Im}
\theoremstyle{plain}
\newtheorem{lemma}{Lemma}[section]
\newtheorem{theorem}[lemma]{Theorem}
\newtheorem{corollary}[lemma]{Corollary}
\theoremstyle{definition}
\newtheorem{example}[lemma]{Example}
\theoremstyle{definition}
\theoremstyle{remark}
\newtheorem{remark}[lemma]{Remark}
\pgfplotsset{
	every tick label/.append style={font=\footnotesize}
}
\tikzset{
	invisible/.style={opacity=0},
	visible on/.style={alt={#1{}{invisible}}},
	alt/.code args={<#1>#2#3}{%
		\alt<#1>{\pgfkeysalso{#2}}{\pgfkeysalso{#3}}
	},
}
\pgfplotsset{compat=newest}
\makeatletter \newcommand{\pgfplotsdrawaxis}{\pgfplots@draw@axis} \makeatother
\pgfplotsset{axis line on top/.style={
		axis line style=transparent,
		ticklabel style=transparent,
		tick style=transparent,
		axis on top=false,
		after end axis/.append code={
			\pgfplotsset{axis line style=opaque,
				ticklabel style=opaque,
				tick style=opaque,
				grid=none}
			\pgfplotsdrawaxis}
	}
}
\definecolor{darkgray}{gray}{0.2}
\newcommand{\devnull}[1]{}
\numberwithin{equation}{section}
\title{On non-negative solutions of SDDEs with an application to CARMA processes}
\author{Mikkel Slot Nielsen\footnote{Department of Statistics, Columbia University, USA. E-mail: m.nielsen@columbia.edu.}
	\and Victor Rohde}
\date{\ }
\begin{document}
\maketitle

\begin{abstract}
This note provides a simple sufficient condition ensuring that solutions of stochastic delay differential equations (SDDEs) driven by subordinators are non-negative. While, to the best of our knowledge, no simple non-negativity conditions are available in the context of SDDEs, we compare our result to the literature within the subclass of invertible continuous-time ARMA (CARMA) processes. In particular, we analyze why our condition cannot be necessary for CARMA($p,q$) processes when $p=2$, and we show that there are various situations where our condition applies while existing results do not as soon as $p\geq 3$. Finally, we extend the result to a multidimensional setting.
\\ \\
\footnotesize \textit{MSC 2010 subject classifications: 60G10; 60G17; 60H05; 60H10} 
\\ \  \\
\textit{Keywords: CARMA processes; complete monotonicity; non-negative stationary processes; stochastic delay differential equations; subordinators} 
\end{abstract}

\section{Introduction}
Many quantities, such as wind speeds or (local) volatility of assets, are non-negative and behave in a stationary manner, and thus any reasonable model for these phenomena should comply with such constraints. 
\citet{OEBN_non_G_OU} advocated the use of the stationary Ornstein--Uhlenbeck process driven by a subordinator (or, equivalently, a non-negative Lévy process) $(L_t)_{t\in \mathbb{R}}$, that is, the unique stationary solution to
\begin{equation}\label{OUintro}
\dd X_t = -\lambda X_t \dd t + \dd L_t,\qquad t \in \mathbb{R},
\end{equation}
for some $\lambda \in (0,\infty)$. Since the solution of \eqref{OUintro} is explicitly given by
\begin{equation}\label{MAintroduce}
X_t = \int_{-\infty}^t e^{-\lambda (t-s)}\, \dd L_s,\qquad t \in \mathbb{R},
\end{equation}
 a convolution between a non-negative kernel $t\mapsto e^{-\lambda t}$ and a non-negative random measure $\dd L$, it is automatically non-negative. However, due to the simplicity of the Ornstein--Uhlenbeck process, its autocorrelation function is $\text{Corr}(X_0,X_h)= e^{-\lambda \vert h\vert}$ with the only parameter being the rate of decay $\lambda$; in particular, the autocorrelation function must be monotonically decreasing. Consequently, there has been a need for working with more flexible modeling classes. A particular popular one consists of the continuous-time ARMA (CARMA) processes, which (as the name suggests) is the natural continuous-time analogue to the discrete-time ARMA processes. A CARMA process $(X_t)_{t\in \mathbb{R}}$ can be characterized as a moving average of the form
 \begin{equation}\label{generalMAintro}
 X_t = \int_{-\infty}^t g (t-s)\, \dd L_s,\qquad t \in \mathbb{R},
 \end{equation}
 where the Fourier transform of $g\colon [0,\infty)\to \mathbb{R}$ is rational. Consequently, a CARMA process is made up of a Lévy process $(L_t)_{t \in \mathbb{R}}$, a denominator (autoregressive) polynomial $P$, and a numerator (moving average) polynomial $Q$. Here it is required that $\deg (P)> \deg (Q)$ and that the zeroes of $P$ belong to $\{z\in \mathbb{C}\,:\, \Re (z) < 0\}$. For more details on CARMA processes, see \cite{brockLevy,brockwell2014recent,brockwell2011estimation,marquardt2007multivariate} or Section~\ref{CARMArelations}. Another important class, which also extends the Ornstein--Uhlenbeck process, is the one formed by solutions of stochastic delay differential equations (SDDEs) of the form
 \begin{equation}\label{SDDEintro}
\dd X _t = \int_{[0,\infty)}X_{t-s}\, \phi (\dd s)\, \dd t + \dd L_t,\qquad t \in\mathbb{R},
 \end{equation}
 where $\phi$ is a signed measure. Such equations have, for instance, been studied in \cite{basse2019multivariate,GK,mohammed1990lyapunov}. Under suitable conditions, which will be stated in Section~\ref{SDDEnonNeg}, there exists a unique stationary solution to \eqref{SDDEintro} and it is, as well, a moving average of the form \eqref{generalMAintro} with $g$ being characterized through its Fourier transform. While both CARMA processes and solutions of SDDEs give rise to increased flexibility in the kernel $g$, and hence in the autocorrelation function, it is no longer guaranteed that it stays non-negative on $[0,\infty)$. This means that $(X_t)_{t\in \mathbb{R}}$ is not necessarily non-negative although $(L_t)_{t\in \mathbb{R}}$ is so, and one must instead place additional restrictions on $(P,Q)$ and $\phi$. A necessary and sufficient, but unfortunately rather implicit, condition ensuring non-negativity of $g$ is given by the famous Bernstein theorem on completely monotone functions \cite{bernstein1929fonctions}. In the context of CARMA processes, more explicit sufficient conditions were provided by \cite{ball1994completely,tsai2005note} as they argued that a CARMA process is non-negative if the zeroes of the associated polynomials $P$ and $Q$ are real and negative, and if they respect a certain ordering (see \eqref{CARMAsufficiency}). To the best of our knowledge, this is the only available easy-to-check condition. In addition to the fact that this condition is not able to identify all non-negative CARMA processes, the Fourier transform of the driving kernel $g$ of the solution of an SDDE \eqref{SDDEintro} is most often not rational, meaning that the condition is not applicable in such setting.
 
 In this paper we show that, in order for the unique solution $(X_t)_{t\in \mathbb{R}}$ of \eqref{SDDEintro} to be non-negative, it is sufficient that $\phi$ is a non-negative measure when restricted to~$(0,\infty)$, that is, 
 \begin{equation}\label{ourCond}
 \phi (B\cap (0,\infty)) \geq 0\qquad \text{for all measurable sets $B$.}
 \end{equation}
 This result is presented in Section~\ref{SDDEnonNeg}, in which we also give various examples and illustrate through simulations that simple violations of \eqref{ourCond} results in solutions which can indeed go negative. Furthermore, in Section~\ref{CARMArelations} we exploit the relation between SDDEs and invertible CARMA processes (that is, CARMA processes whose associated moving average polynomial $Q$ only has zeroes in $\{z\in \mathbb{C}\, :\, \Re (z)<0\}$) to establish conditions ensuring that a CARMA process is non-negative. Specifically, we observe that it is sufficient to show complete monotonicity of the rational function $R/Q$ instead of $Q/P$ on $[0,\infty)$, where $R$ is the (negative) remainder polynomial obtained from division of $P$ with $Q$. This result is compared to the findings of \cite{ball1994completely, tsai2005note} and it is shown that the two approaches identify different, but overlapping, regions of parameter values for which the CARMA($3,2$) process is non-negative. Finally, in Section~\ref{multivariate} we extend the theory to the multivariate SDDEs (as introduced in \cite{basse2019multivariate}). In particular, when leaving the univariate setting we find that, in addition to a condition similar to \eqref{ourCond} for a matrix-valued signed measure ${\boldsymbol \phi}$, one needs to require that ${\boldsymbol \lambda} \coloneqq - {\boldsymbol \phi}(\{0\})$ is a so-called $M$-matrix. Section~\ref{proofs} contains proofs of the stated results and a couple of auxiliary lemmas.
 
 Before turning to the above-mentioned sections, we devote a paragraph to introduce relevant notations as well as essential background knowledge.
  
\paragraph*{Preliminaries}
The models that we will consider are built on subordinators or, equivalently, non-negative Lévy processes. Recall that a real-valued stochastic process $(L_t)_{t\geq 0}$, $L_0 \equiv 0$, is called a one-sided subordinator if it has càdlàg sample paths and its increments are stationary, independent, and non-negative. These properties imply that the law of $(L_t)_{t\geq 0}$ (that is, of all its finite-dimensional marginals) is completely determined by that of $L_1$, which is infinitely divisible and, thus,
\begin{equation*}
\log \mathbb{E}[e^{i\theta L_1}] = i\theta\gamma + \int_0^\infty (e^{i\theta x}-1-i \theta x\mathds{1}_{ x  \leq 1})\, \nu (\dd x),\qquad \theta \in \mathbb{R},
\end{equation*}
by the Lévy--Khintchine formula and \cite[Proposition~3.10]{contTankov}. Here $\gamma \in [0,\infty)$ and $\nu$ is a $\sigma$-finite measure on $(0,\infty)$ with $\int_0^\infty (1\wedge x)\, \nu (\dd x)<\infty$ ($\nu$ is a Lévy measure). For any subordinator $(L_t)_{t\geq 0}$ the induced pair $(\gamma,\nu)$ is unique and, conversely, given such pair, there exists subordinator (which is unique in law) inducing this pair. We define a two-sided subordinator $(L_t)_{t\in \mathbb{R}}$ from two independent one-sided subordinators $(L^1_t)_{t\geq 0}$ and $(L^2_t)_{t\geq 0}$ with the same law by
\begin{equation*}
L_t = 
L^1_t\mathds{1}_{[0,\infty)}(t)
 - L^2_{(-t)-} \mathds{1}_{(-\infty,0)}(t),\qquad t \in \mathbb{R}.
\end{equation*}
Examples of subordinators include the gamma Lévy process, the inverse Gaussian Lévy process, and any compound Poisson process constructed from a sequence of non-negative i.i.d.\ random variables. We may also refer to multivariate subordinators, which are simply Lévy processes with values in $\mathbb{R}^d$, $d\geq 2$, and entrywise non-negative increments (see \cite{Sato} for details).

We will say that a real-valued set function $\mu$, defined for any Borel set of $[0,\infty)$, is a signed measure if $\mu  = \mu_+-\mu_-$ for two singular finite Borel measures $\mu_+$ and $\mu_-$ on $[0,\infty)$. Note that the variation $\vert \mu\vert \coloneqq \mu_+ + \mu_-$ of $\mu$ is a measure on $[0,\infty)$ and that integration with respect to $\mu$ can be defined in an obvious manner for any function $f\colon [0,\infty)\to \mathbb{R}$ which is integrable with respect to $\vert \mu \vert$.

In the last part of the paper we will consider a multivariate setting, and to distinguish this from the univariate one, we shall denote a matrix ${\boldsymbol A}$ with bold font and, unless stated otherwise, refer to its $(j,k)$-th entry by $A_{jk}$. Finally, integration of matrix-valued functions against matrix-valued signed measures (that is, matrices whose entries are signed measures) can be defined in an obvious manner by means of the usual rules for matrix multiplication.

\section{Stochastic delay differential equations and non-negative solutions}\label{SDDEnonNeg}
Let $(L_t)_{t\in \mathbb{R}}$ be a two-sided subordinator with a non-zero Lévy measure $\nu$ satisfying $\int_1^\infty  x \, \nu (\dd x)<\infty$ or, equivalently, $\mathbb{E}[L_1]<\infty$. Moreover, let $\phi$ be a signed measure on $[0,\infty)$ with
\begin{equation}\label{phiMoments}
\int_0^\infty t^2 \vert \phi \vert (\dd t)<\infty.
\end{equation}
A stochastic process $(X_t)_{t\in \mathbb{R}}$ is said to be a solution of the corresponding SDDE if it is stationary, has finite first moments (that is, $\mathbb{E}[\vert X_0 \vert]<\infty$), and
\begin{equation}\label{SDDErelation}
X_t - X_s = \int_s^t \int_{[0,\infty)} X_{u-v} \, \phi (\dd v) \, \dd u + L_t -L_s,\qquad s<t.
\end{equation}
By \eqref{SDDErelation}, we mean that the equality holds almost surely for each \emph{fixed} pair $(s,t)\in \mathbb{R}^2$ with $s<t$. We will often write the SDDE in differential form as
\begin{equation*}
\dd X_t = \int_{[0,\infty)} X_{t-s}\, \phi (\dd s)\, \dd t + \dd L_t,\qquad t \in \mathbb{R}.
\end{equation*}

\noindent Set $\mathbb{C}_+\coloneqq \{z \in \mathbb{C}\, :\, \Re(z)\geq 0\}$. Properties such as existence and uniqueness of solutions of \eqref{SDDErelation} are closely related to the zeroes of the function $h_\phi\colon \mathbb{C}_+\to \mathbb{C}$ given by
\begin{equation}\label{hFunction}
h_\phi (z) \coloneqq z- \int_{[0,\infty)} e^{-zt}\, \phi (\dd t),\qquad z \in \mathbb{C}_+.
\end{equation}

\noindent In case $\phi$ has bounded support, existence and uniqueness of solutions of \eqref{SDDErelation} were established in \cite{GK,mohammed1990lyapunov} (under even milder conditions on $(L_t)_{t\in \mathbb{R}}$). As we will later translate our findings into the framework of CARMA processes, which correspond to a particular class of delay measures $\phi$ with unbounded support, we will rely on the following result of \cite{basse2019multivariate}: 

\begin{theorem}[\citet{basse2019multivariate}]\label{existenceTheorem}
	
	Let $h_\phi$ be given as in \eqref{hFunction} and assume that $h_\phi(z) \neq 0$ for all $z \in \mathbb{C}_+$. Then the unique solution of \eqref{SDDErelation} is given by   
	\begin{equation}\label{solution}
	X_t = \int^t_{- \infty} g_\phi(t-u)\, \dd L_u,\qquad t \in \mathbb{R},
	\end{equation}
	where $g_\phi\colon [0,\infty) \to \mathbb{R}$ is characterized by 
	\begin{align}\label{defOfg}
	\int_0^\infty e^{-ity}g_\phi (t)\, \dd t = \frac{1}{h_\phi(iy)},\qquad y \in \mathbb{R}.
	\end{align}
\end{theorem}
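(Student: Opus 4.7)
The plan is to follow a harmonic-analytic strategy, as is standard for linear SDDEs with an absolutely summable delay structure. The argument splits into three parts: construction and analysis of $g_\phi$, verification that the moving average \eqref{solution} solves \eqref{SDDErelation}, and uniqueness.

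First I would show that $g_\phi$ is well defined as an $L^1([0,\infty))$ function. By the moment assumption \eqref{phiMoments} the integral term in \eqref{hFunction} is uniformly bounded on $\mathbb{C}_+$, so $h_\phi(z)/z \to 1$ as $|z|\to \infty$ in $\mathbb{C}_+$. Combined with the standing assumption that $h_\phi$ is zero-free on $\mathbb{C}_+$ (and continuous, hence bounded away from zero on compact pieces of $i\mathbb{R}$), this gives $|1/h_\phi(iy)|\lesssim 1/(1+|y|)$, so $1/h_\phi(i\cdot)\in L^2(\mathbb{R})$; one then defines $g_\phi$ as the inverse Fourier transform. The decisive point, where the assumption that $h_\phi$ has no zeros on $\mathbb{C}_+$ is used in full strength, is that $g_\phi$ must vanish on $(-\infty,0)$: this is a Paley--Wiener argument exploiting the fact that $1/h_\phi$ extends holomorphically with uniform $H^2$ control to the open right half-plane. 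A finer asymptotic expansion of $h_\phi(iy)$, again using \eqref{phiMoments}, yields additional decay that places $g_\phi$ in $L^1$ as well. Together with $\mathbb{E}[L_1]<\infty$, this ensures that the stochastic integral in \eqref{solution} is well defined and produces a stationary process with finite first moment.

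To verify \eqref{SDDErelation} for this $X_t$, I would insert the definition of $X_{u-v}$ into the double integral on the right-hand side and apply a stochastic Fubini theorem; the claim then reduces to the deterministic identity $g_\phi(t)-\mathds{1}_{\{t\geq 0\}} = \int_0^t \int_{[0,\infty)} g_\phi(u-v)\,\phi(\dd v)\,\dd u$ for $t\in\mathbb{R}$, which is nothing other than the time-integrated form of $h_\phi(iy)\widehat{g_\phi}(y)=1$. For uniqueness, if $Y$ is another solution, then $Z\coloneqq Y-X$ is stationary, mean-integrable, and satisfies the homogeneous equation $\dd Z_t = \int_{[0,\infty)} Z_{t-v}\,\phi(\dd v)\,\dd t$. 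Taking expectations and using $h_\phi(0)=-\phi([0,\infty))\neq 0$ forces $\mathbb{E}[Z_0]=0$; a Fourier-analytic argument applied to the characteristic functionals of $(Z_{t_1},\dots,Z_{t_n})$, again enabled by the zero-freeness of $h_\phi$ on $\mathbb{C}_+$, then shows $Z\equiv 0$ almost surely.

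The main obstacle is the construction step: confirming that the distributional inverse Fourier transform of $1/h_\phi(i\cdot)$ is genuinely a function supported on $[0,\infty)$ with enough integrability for both the deterministic convolution identity and the stochastic integration theory to apply. Both the support property and the integrability rest on the Paley--Wiener machinery combined with the sharp growth estimate $h_\phi(iy)\sim iy$ afforded by \eqref{phiMoments}, and it is here that the hypothesis on the zeros of $h_\phi$ is indispensable.
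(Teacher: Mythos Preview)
The paper does not contain a proof of this theorem: Theorem~\ref{existenceTheorem} is stated with an explicit attribution to \citet{basse2019multivariate} and is invoked as a black box throughout, so there is no ``paper's own proof'' to compare your proposal against. Your sketch is a plausible outline of the harmonic-analytic argument one expects in the cited reference (Paley--Wiener for the support of $g_\phi$, a convolution identity to verify \eqref{SDDErelation}, and a Fourier argument for uniqueness), but within the present paper the result is simply imported, not re-derived.

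If you intend to supply an independent proof, two points in your outline would need sharpening. First, the $L^1$ integrability of $g_\phi$ does not follow from $|1/h_\phi(iy)|\lesssim (1+|y|)^{-1}$ alone; one typically works with $g_\phi\in L^2$ and exploits the relation $g_\phi(t)=g_\phi(0)+\int_0^t (\phi\ast g_\phi)(u)\,\dd u$ (which the present paper records as \eqref{step1} in the proof of Lemma~\ref{induction}) to upgrade to continuity and exponential decay. Second, your uniqueness argument via characteristic functionals is vague as stated; the standard route is to show that any stationary $L^1$ solution of the homogeneous equation has spectral measure annihilated by $|h_\phi(iy)|^2$, which forces it to vanish since $h_\phi$ is zero-free on $i\mathbb{R}$.
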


\begin{remark}\label{negDelay}
	Although it might sometimes be challenging to show that $h_\phi(z) \neq 0$ when $z\in \mathbb{C}_+$, as required by Theorem~\ref{existenceTheorem}, a necessary and easy-to-check condition is that $\phi ([0,\infty))<0$. Indeed, this follows from the facts that $h_\phi (0) = - \phi ([0,\infty))$, $h_\phi$ is continuous, and $h_\phi (x)\to \infty$ as $x\to \infty$ (where $x\in [0,\infty)$).
\end{remark}

To get an idea of which stationary processes that can be generated from \eqref{SDDErelation}, we provide a couple of examples where the condition of Theorem~\ref{existenceTheorem} can be checked.

\begin{example}[CARMA processes]\label{CARMAprocesses}
	Let $\lambda \in \mathbb{R}$ and, for a given $p\in \mathbb{N}$, suppose ${\boldsymbol b}\in \mathbb{R}^p$ and ${\boldsymbol A}\in \mathbb{R}^{p\times p}$ with a spectrum $\sigma ({\boldsymbol A})$ contained in $\{z \in \mathbb{C}\, :\, \Re(z) >0\}$. Consider the delay measure
	\begin{equation}\label{CARMAdelay}
	\phi (\dd t) = -\lambda\,  \delta_0 (\dd t) + {\boldsymbol b}^\top e^{-{\boldsymbol A}t}{\boldsymbol e}_1\, \dd t,
	\end{equation}
	 where $\delta_0$ is the Dirac measure at $0$ and ${\boldsymbol e}_1$ is the first canonical basisvector of $\mathbb{R}^p$. Note that ${\boldsymbol e}_1$ is merely used as a normalization: the effect of replacing ${\boldsymbol e}_1$ by an arbitrary vector ${\boldsymbol c}\in \mathbb{R}^p$ can be incorporated in ${\boldsymbol b}$ and ${\boldsymbol A}$. By the assumption on $\sigma ({\boldsymbol A})$, all the entries of $e^{-{\boldsymbol A}t}$ are exponentially decaying as $t \to\infty$ and, thus, $\vert \phi\vert$ is a finite measure with moments of any order (in particular, \eqref{phiMoments} is satisfied). The function $h_\phi$ takes the form $h_\phi (z) = z +\lambda - {\boldsymbol b}^\top ({\boldsymbol A}+z{\boldsymbol I}_p)^{-1}{\boldsymbol e}_1$, where ${\boldsymbol I}_p$ is the $p\times p$ identity matrix. By the fraction decomposition it follows that ${\boldsymbol b}^\top ({\boldsymbol A}+z{\boldsymbol I}_p)^{-1}{\boldsymbol e}_1 = R(z)/Q(z)$ for $z\in \mathbb{C}_+$, where $(Q,R)$ is the unique pair of real polynomials $Q,R\colon \mathbb{C}\to \mathbb{C}$ such that (i) $Q$ is monic and has no zeroes on $\mathbb{C}_+$, (ii)~$\text{deg}(Q)>\text{deg}(R)$, and (iii) $Q$ and $R$ have no common zeroes. Consequently, it follows from Theorem~\ref{existenceTheorem} that, as long as
	 \begin{equation*}
	 P(z)\coloneqq (z+\lambda)Q(z) - R(z) \neq 0\qquad \text{for all $z\in \mathbb{C}_+$,}
	 \end{equation*}
	 there exists a unique stationary solution $(X_t)_{t\in \mathbb{R}}$ to \eqref{SDDErelation}, and it is given by \eqref{solution} with $g_\phi$ satisfying 
	 \begin{equation}\label{CARMAkernel}
	 \int_0^\infty e^{-ity}g_\phi (t)\, \dd t = \frac{Q(iy)}{P(iy)} ,\qquad y\in \mathbb{R}.
	 \end{equation}
	 In other words, $(X_t)_{t\in \mathbb{R}}$ is a causal and invertible CARMA process with autoregressive polynomial $P$ and moving average polynomial $Q$ (see Section~\ref{CARMArelations} or \cite[Remark~4]{brockwell2014recent}). Conversely, given a moving average
	 \begin{equation*}
	 X_t = \int_{-\infty}^t g(t-s)\, \dd L_s,\qquad t\in \mathbb{R},
	 \end{equation*}
	 with $g\colon [0,\infty)\to \mathbb{R}$ characterized by \eqref{CARMAkernel} for some polynomials $P$ and $Q$ having no zeroes in $\mathbb{C}_+$, and which satisfy $\text{deg}(P) = \text{deg}(Q) +1$, one can choose a unique constant $\lambda\in \mathbb{R}$ such that the polynomial $R(z)\coloneqq (z+\lambda)Q(z) - P(z)$ meets $\text{deg}(R)<\text{deg}(Q)$. Thus, it follows that such process constitutes a stationary solution of the SDDE \eqref{SDDErelation} with a delay measure $\phi$ of the form \eqref{CARMAdelay}. Indeed, this is due the fact that a function $f\colon [0,\infty)\to \mathbb{R}$ with a rational Laplace transform $R/Q$ can always be represented as $f(t) ={\boldsymbol b}^\top e^{-{\boldsymbol A}t}{\boldsymbol e}_1$ for suitable ${\boldsymbol b}\in \mathbb{R}^{\deg (Q)}$ and ${\boldsymbol A}\in \mathbb{R}^{\deg (Q)\times \deg (Q)}$ with $\sigma ({\boldsymbol A})\subseteq \{z\in\mathbb{C}\, :\, \Re (z)>0\}$. For more on the relation between solutions of SDDEs and CARMA processes, see \cite[Section~4.3]{basse2019multivariate}.
\end{example}

\begin{example}[Discrete delay]\label{discDelay} Let $\lambda,\xi\in \mathbb{R}$ and $\tau \in (0,\infty)$, and assume that $\vert \xi \vert \leq \tau^{-1}$. Consider the following SDDE written in differential form:
	\begin{equation}\label{discDelayEq}
	\dd X_t = (-\lambda X_t + \xi X_{t-\tau})\, \dd t + \dd L_t,\qquad t \in \mathbb{R}.
	\end{equation}
	To show existence of a unique stationary solution using Theorem~\ref{existenceTheorem}, we must argue that $z+ \lambda - \xi e^{-z\tau}\neq 0$ whenever $z\in \mathbb{C}_+$ or, equivalently, that the two equations
	\begin{equation}\label{equationsDisc}
	x+\lambda - \xi e^{-\tau x}\cos (\tau y) = 0\qquad \text{and} \qquad y + \xi \sin (\tau y) = 0
	\end{equation}
	cannot hold simultaneously if $x\geq 0$ and $y\in \mathbb{R}$. Since the only real solution of an equation of the form $u = \sin (\alpha u)$ is $u=0$ when $\vert \alpha \vert \leq 1$, the second equation in \eqref{equationsDisc} implies that $y=0$. If this is the case we have that
	\begin{equation*}
	x + \lambda - \xi e^{-\tau x}\cos (\tau y) \geq \lambda - \xi,
	\end{equation*}
	since $u\mapsto u-\xi e^{-\tau u}$ is increasing on $[0,\infty)$. In view of Remark~\ref{negDelay}, we conclude that $h_\phi (z) \neq 0$ for all $z\in \mathbb{C}_+$ if and only if $\xi<\lambda$. 
\end{example}

Turning to the question of whether a given solution $(X_t)_{t\in \mathbb{R}}$ to \eqref{SDDErelation} is non-negative, we observe initially that, since it will necessarily take the form \eqref{solution}, there exist a number of equivalent statements for non-negativity:
\begin{theorem}\label{nonNegativeOfSDDEs}
	
	Let $h_\phi$ be given as in \eqref{hFunction} and assume that $h_\phi(z) \neq 0$ for all $z \in \mathbb{C}_+$. Furthermore, let $(X_t)_{t\in \mathbb{R}}$ and $g_\phi$ be defined through \eqref{solution} and \eqref{defOfg}, respectively. The following statements are equivalent:
	\begin{enumerate}[(i)]
	\item\label{p1} $X_t \geq 0$ almost surely for some $t\in \mathbb{R}$.
	\item\label{p2} $X_t \geq 0$ almost surely for all $t\in \mathbb{R}$.
	\item\label{p3} $g_\phi$ is non-negative almost everywhere.
	\item\label{p4} $1/h_\phi$ is completely monotone on $[0,\infty)$, that is, 
	\begin{equation}\label{compMonotone}
	(-1)^n \frac{\dd^n }{\dd x^n}\frac{1}{h_\phi (x)} \geq 0 \qquad \text{for all $x >0$ and $n\in \mathbb{N}_0$.}
	\end{equation}
	\end{enumerate}
	
\end{theorem}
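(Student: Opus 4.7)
}

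The plan is to establish the four equivalences via the chain (i) $\Leftrightarrow$ (ii), (ii) $\Leftrightarrow$ (iii), (iii) $\Leftrightarrow$ (iv). The equivalence of (i) and (ii) is essentially immediate from stationarity: since $(X_t)_{t\in\mathbb{R}}$ is strictly stationary, $\mathbb{P}(X_t \geq 0) = \mathbb{P}(X_s \geq 0)$ for all $s,t$, so full-probability non-negativity at one time point propagates to every time point (the statement in (ii) is understood coordinate-wise, not pathwise). The implication (iii) $\Rightarrow$ (ii) is also direct: if $g_\phi \geq 0$ a.e., then the moving-average representation \eqref{solution} expresses $X_t$ as a stochastic integral of a non-negative deterministic kernel against the non-negative random measure induced by the subordinator, hence $X_t \geq 0$ almost surely.

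The core obstacle is (ii) $\Rightarrow$ (iii). My approach here is to exploit the infinite divisibility of $X_0$. Writing $L$ through its L\'evy--It\^o decomposition with drift $\gamma \geq 0$ and Poisson random measure $N$ with intensity $\mathrm{Leb}\otimes \nu$ on $\mathbb{R}\times (0,\infty)$, the random variable $X_0 = \int_{-\infty}^0 g_\phi(-s)\, \dd L_s$ is infinitely divisible, with L\'evy measure $\nu_{X_0}$ equal to the pushforward of $\mathrm{Leb}(\dd s)\otimes \nu(\dd x)$ under the map $(s,x) \mapsto g_\phi(-s)x$ (this uses $g_\phi \in L^1([0,\infty))$, which follows from $1/h_\phi$ being the $L^2$-Fourier transform of $g_\phi$ together with suitable integrability; the condition \eqref{phiMoments} guarantees enough regularity to make this rigorous). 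Since any infinitely divisible law supported on $[0,\infty)$ must have its L\'evy measure supported on $[0,\infty)$, the hypothesis $X_0 \geq 0$ a.s.\ forces $\nu_{X_0}((-\infty,0)) = 0$. If $\{s : g_\phi(-s) < 0\}$ had positive Lebesgue measure, then, since $\nu$ is a non-zero measure on $(0,\infty)$, the set $\{(s,x): g_\phi(-s)x < 0\}$ would have strictly positive $\mathrm{Leb}\otimes\nu$-measure, producing mass of $\nu_{X_0}$ in $(-\infty,0)$, a contradiction. Hence $g_\phi \geq 0$ almost everywhere.

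For the remaining equivalence (iii) $\Leftrightarrow$ (iv), the key is Bernstein's theorem. Assuming (iii), note that $g_\phi$ is a non-negative integrable function, and by analytic continuation of the identity \eqref{defOfg} from the imaginary axis into $\mathbb{C}_+$ (both sides are well-defined and holomorphic on the open right half-plane and continuous up to the boundary), one gets $1/h_\phi(x) = \int_0^\infty e^{-xt} g_\phi(t)\, \dd t$ for every $x\in [0,\infty)$; this is manifestly completely monotone, giving (iv). Conversely, if (iv) holds, Bernstein's theorem yields a non-negative Borel measure $\mu$ on $[0,\infty)$ with $1/h_\phi(x) = \int_{[0,\infty)} e^{-xt}\,\mu(\dd t)$. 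Evaluating at $x = iy$ along the imaginary axis and comparing with \eqref{defOfg}, the uniqueness of Fourier transforms of finite signed measures forces $\mu(\dd t) = g_\phi(t)\,\dd t$ as measures on $[0,\infty)$, so $g_\phi \geq 0$ almost everywhere.

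The subtle step to watch out for is the identification of the L\'evy measure of $X_0$ in the (ii) $\Rightarrow$ (iii) argument: it is easy to state but requires being careful about the truncation in the L\'evy--Khintchine formula and about using the integrability $\int_1^\infty x\,\nu(\dd x)<\infty$ (assumed at the start of Section~\ref{SDDEnonNeg}) to avoid compensator complications when $g_\phi$ changes sign. All other steps are essentially formal once this is in place.
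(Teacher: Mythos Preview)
Your proposal is correct and follows essentially the same route as the paper: stationarity for (i)$\Leftrightarrow$(ii), the obvious direction (iii)$\Rightarrow$(ii), the L\'evy measure of $X_0$ (as a pushforward of $\mathrm{Leb}\otimes\nu$ under $(s,x)\mapsto g_\phi(-s)x$) for (ii)$\Rightarrow$(iii), and Bernstein's theorem for (iii)$\Leftrightarrow$(iv). The only cosmetic difference is that the paper dispatches the ``subtle step'' you flag by citing Rajput and Rosi\'nski \cite{rosSpec} for the spectral L\'evy measure formula, rather than unpacking the L\'evy--It\^o decomposition by hand; this absorbs the truncation and compensator bookkeeping you mention.
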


\begin{remark}\label{nonNegRemark}
	Let the setting be as in Theorem~\ref{nonNegativeOfSDDEs}. The notion of almost sure non-negativity is the best possible given that the process $(X_t)_{t\in \mathbb{R}}$ itself is only defined for each fixed $t\in \mathbb{R}$ up to a set of probability zero. However, by \eqref{SDDErelation}, $(X_t)_{t\in \mathbb{R}}$ can always be chosen to have càdlàg sample paths, in which case the property $X_t\geq 0$ will hold across all $t\in \mathbb{R}$ outside a set of probability zero.
\end{remark}

 While Theorem~\ref{nonNegativeOfSDDEs} tells that it is sufficient to show non-negativity of $g_\phi$, the kernel is often not tractable---not even when $\phi$ is rather simple. To give an example where this strategy does indeed work out, note that the solution of \eqref{discDelayEq} with $\lambda >0$ and $\xi = 0$ is the Ornstein--Uhlenbeck process, so $g_\phi (t) = e^{-\lambda t}$ (by \eqref{defOfg} as $h_\phi (z) = z+\lambda$) and, thus, proves that the solution is non-negative. In contrast, as soon as $\xi \neq 0$, $g_\phi$ cannot be explicitly determined since its structure depends on the infinitely many solutions of the equation $z+\lambda - \xi e^{-\tau z}=0$ for $z\in \mathbb{C}$ (see \cite[Lemma~2.1]{GK} for details). The following result, which relies on part~\ref{p4} of Theorem~\ref{nonNegativeOfSDDEs}, provides a sufficient and simple condition on the delay $\phi$ which ensures that the solution is non-negative.

\begin{theorem}\label{NonNegDelay}
		
		Let $h_\phi$ be defined as in \eqref{hFunction} and assume that $h_\phi(z) \neq 0$ for all $z \in \mathbb{C}_+$. Suppose further that $\phi$ is a non-negative measure when restricted to $(0,\infty)$, that is,
		\begin{equation}\label{theoremDelayCond}
		\phi (B\cap (0,\infty)) \geq 0\qquad \text{for all measurable sets $B$.}
		\end{equation}
		Then \ref{p1}--\ref{p4} of Theorem~\ref{nonNegativeOfSDDEs} are satisfied. 

\end{theorem}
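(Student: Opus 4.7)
The plan is to verify condition \ref{p4} of Theorem~\ref{nonNegativeOfSDDEs}, namely that $1/h_\phi$ is completely monotone on $(0,\infty)$, and then invoke that theorem to conclude \ref{p1}--\ref{p3}. The key is to isolate the Dirac atom at~$0$ in $\phi$ and exhibit $1/h_\phi$ as an absolutely convergent sum of manifestly completely monotone functions.

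First I would set $\lambda \coloneqq -\phi(\{0\})$ and $\psi \coloneqq \phi|_{(0,\infty)}$, so that $\phi = -\lambda \delta_0 + \psi$. By hypothesis \eqref{theoremDelayCond}, $\psi$ is a non-negative finite Borel measure on $(0,\infty)$. Since $h_\phi(z)\neq 0$ on $\mathbb{C}_+$, Remark~\ref{negDelay} gives $\phi([0,\infty)) = -\lambda + \psi((0,\infty)) < 0$, hence $\lambda > \psi((0,\infty)) \geq 0$. For $x \geq 0$ I then rewrite
\begin{equation*}
h_\phi(x) = (x+\lambda) - \mathcal{L}\psi(x),\qquad \mathcal{L}\psi(x) \coloneqq \int_{(0,\infty)} e^{-xt}\, \psi(\dd t),
\end{equation*}
and factor out $x+\lambda>0$ to obtain $h_\phi(x) = (x+\lambda)\bigl(1-\eta(x)\bigr)$, where $\eta(x) \coloneqq \mathcal{L}\psi(x)/(x+\lambda)$.

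The next step is the uniform bound $0 \leq \eta(x) \leq \psi((0,\infty))/\lambda =: r < 1$ for all $x\geq 0$, which follows from $e^{-xt}\leq 1$ and $x+\lambda \geq \lambda$. Expanding the geometric series then gives
\begin{equation*}
\frac{1}{h_\phi(x)} \;=\; \frac{1}{x+\lambda}\sum_{n=0}^{\infty} \eta(x)^n \;=\; \sum_{n=0}^{\infty} \frac{\mathcal{L}\psi(x)^n}{(x+\lambda)^{n+1}},\qquad x>0,
\end{equation*}
with the sum dominated by $(x+\lambda)^{-1}(1-r)^{-1}$. Each summand is a product of completely monotone functions: $(x+\lambda)^{-n-1}$ is completely monotone since $\lambda>0$, and $\mathcal{L}\psi$ is completely monotone as the Laplace transform of a non-negative measure; products and non-negative integer powers of completely monotone functions are completely monotone. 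Using the standard fact that a pointwise limit of completely monotone functions on $(0,\infty)$ is again completely monotone, I conclude that $1/h_\phi$ is completely monotone, and Theorem~\ref{nonNegativeOfSDDEs}\ref{p4} finishes the proof.

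The main obstacle, which turns out to be mild, is establishing $\lambda>\psi((0,\infty))$ strictly (needed both for the geometric expansion and for $\lambda>0$); this is exactly what Remark~\ref{negDelay} provides. Beyond that, the argument is essentially algebraic, with the only analytic input being the closure of completely monotone functions under products and pointwise limits.
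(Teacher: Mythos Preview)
Your argument is correct. The decomposition $\phi=-\lambda\delta_0+\psi$, the strict inequality $\lambda>\psi((0,\infty))$ from Remark~\ref{negDelay}, the geometric expansion of $(1-\eta(x))^{-1}$, and the closure of completely monotone functions under products and (non-negative) infinite sums all go through as you describe; this yields \ref{p4} and hence \ref{p1}--\ref{p3} via Theorem~\ref{nonNegativeOfSDDEs}.

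The paper's proof is different and considerably shorter: it simply notes that, under \eqref{theoremDelayCond}, $x\mapsto h_\phi(x)=x+\lambda-\mathcal{L}\psi(x)$ is a Bernstein function (its derivative $1+\int_{(0,\infty)}te^{-xt}\,\psi(\dd t)$ is completely monotone and $h_\phi(0)>0$), and then invokes the standard fact that the reciprocal of a positive Bernstein function is completely monotone; alternatively, it points to the Fa\`a di Bruno computation in Remark~\ref{diBruno}, where each summand in \eqref{Bruno} is visibly non-negative when $\eta=\psi$ is a non-negative measure. Your geometric-series route is more self-contained in that it avoids both the external Bernstein-function result and the combinatorics of Fa\`a di Bruno, and as a by-product it exhibits $1/h_\phi$ explicitly as the Laplace transform of the non-negative function $\sum_{n\geq 0}\psi^{\ast n}\ast\bigl(t^{n}e^{-\lambda t}/n!\bigr)$, i.e.\ it gives a constructive series for $g_\phi$. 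The paper's approach, on the other hand, is a one-line appeal to well-developed theory and situates the result within the Bernstein/complete-monotonicity framework.
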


\noindent In light of Theorem~\ref{NonNegDelay}, when one is trying to model non-negative processes, it is natural to write \eqref{SDDErelation} as
\begin{equation}\label{nonNegSDDE}
\dd X_t = -\lambda X_t\, \dd t + \int_0^\infty X_{t-s}\, \eta (\dd s)\, \dd t + \dd L_t,\qquad t \in \mathbb{R},
\end{equation}
and then search for non-negative measures $\eta$ on $(0,\infty)$ satisfying $\eta ((0,\infty))<\lambda$ (by Remark~\ref{negDelay} this inequality must hold if $h_\phi (z) \neq 0$ for all $z\in \mathbb{C}_+$). Here we use the convention $\int_0^\infty \coloneqq \int_{(0,\infty)}$.

\begin{remark}\label{diBruno}
	As will, for instance, appear from Example~\ref{CARMA21NonNeg}, the condition \eqref{theoremDelayCond} is not necessary for $(X_t)_{t\in \mathbb{R}}$ to be non-negative. According to Theorem~\ref{nonNegativeOfSDDEs} it is necessary and sufficient that \eqref{compMonotone} is satisfied. The case $n=0$ is always true when $h_\phi (z) \neq 0$ for all $z\in \mathbb{C}_+$, and by relying on Faà di Bruno's formula it can be checked that
	\begin{align}\label{Bruno}
	\begin{aligned}
	\MoveEqLeft (-1)^n \frac{\dd^n }{\dd x^n}\frac{1}{h_\phi (x)}\\
	&= \sum_{\alpha \in \mathbb{N}_0^n\colon \sum_{j=1}^nj\alpha_j = n} 
	\frac{n!\vert \alpha \vert !}{\alpha_1 ! (1!)^{\alpha_1}\cdots \alpha_n! (n!)^{\alpha_n}}  
	\frac{(1-\mathcal{L}^{(1)}_\eta (x))^{\alpha_1}}{h_\phi (x)^{\vert \alpha \vert+1}}\prod_{j=2}^n ((-1)^j\mathcal{L}^{(j)}_\eta (x))^{\alpha_j}
	\end{aligned}
	\end{align}
	when $n\geq 1$, where $\mathcal{L}^{(j)}_\eta$ denotes the $j$th derivative of the Laplace transform $\mathcal{L}_\eta(x)\coloneqq \int_0^\infty e^{-xt}\, \eta (\dd t)$ of $\eta\coloneqq \phi (\: \cdot \: \cap (0,\infty))$. Moreover, $\alpha_j$ refers to the $j$th entry of $\alpha\in \mathbb{N}_0^n$ and $\vert \alpha \vert = \sum_{j=1}^n \alpha_j$. From this expression it is easy to see that is satisfied if $\eta$ is non-negative, since then $\mathcal{L}_\eta$ is completely monotone and, hence, each term of the sum in \eqref{Bruno} will be non-negative. It does, however, also show that $\eta$ cannot be ``too negative'' if the solution $(X_t)_{t\in \mathbb{R}}$ is required to be non-negative; for instance, the restriction for $n=1$ implies in particular that
	\begin{equation*}
	\int_0^\infty t\, \eta (\dd t)\geq -1.
	\end{equation*}
	Unfortunately, the complete set of restrictions implied by \eqref{compMonotone} and \eqref{Bruno} seem to be very difficult to analyze.
\end{remark}

In Figure~\ref{discDelayFigure} we simulate the stationary solution of \eqref{nonNegSDDE} when $\eta =  \xi\delta_\tau$, $\delta_\tau$ being the Dirac measure at $\tau$, with the specific values $\lambda = \tau =1$ and $\xi = 0.2$. For comparison we rerun the simulation with $\xi = -0.8$. Note that existence and uniqueness of the stationary solution is ensured by Example~\ref{discDelay} in both cases. As should be expected, it appears from the simulations that the solution stays non-negative when $\xi = 0.2$, but when $\xi = -0.8$ (where non-negativity is not guaranteed by Theorem~\ref{NonNegDelay}), the solution eventually becomes negative. The latter observation can, for instance, be proved theoretically by checking that
\begin{equation*}
h_\phi (x)^3\frac{\dd^2}{\dd x^2}\frac{1}{h_\phi (x)}\biggr\vert_{x=0} = \xi^2 + 5 \xi +2
\end{equation*}
when $\lambda = \tau = 1$, and hence $1/h_\phi$ is not completely monotone on $[0,\infty)$ if $\xi = -0.8$.

\begin{figure}
	\centering
	\begin{minipage}{0.49\textwidth}
		\begin{tikzpicture}
		\begin{axis}[
		axis lines=middle,
		axis on top = true,
		width=\linewidth, 
		xticklabel style={/pgf/number format/fixed},
		xmax=210,
		ymin=0,
		]
		\addplot[darkgray!50] table [col sep=comma,x=a,y=b,mark=none] {SDDE_pos.csv};
		\end{axis}
		\end{tikzpicture}
	\end{minipage}
	\hfill
	\begin{minipage}{0.49\textwidth}
		\begin{tikzpicture}
		\begin{axis}[
		axis lines=middle, 
		axis on top = true, 
		width=\linewidth, 
		xticklabel style={/pgf/number format/fixed},
		xmax=210,
		]
		\addplot[darkgray!50] table [col sep=comma,x=a,y=b,mark=none] {SDDE_neg.csv};
		\end{axis}
		\end{tikzpicture}
	\end{minipage}
	\caption{Simulations of $X_1,\dots, X_{200}$ from the model \eqref{nonNegSDDE} with $\eta = \xi \delta_1$ when $\xi = 0.2$ (left) and $\xi = -0.8$ (right). In both cases, $\lambda = 1$.}\label{discDelayFigure}
\end{figure}
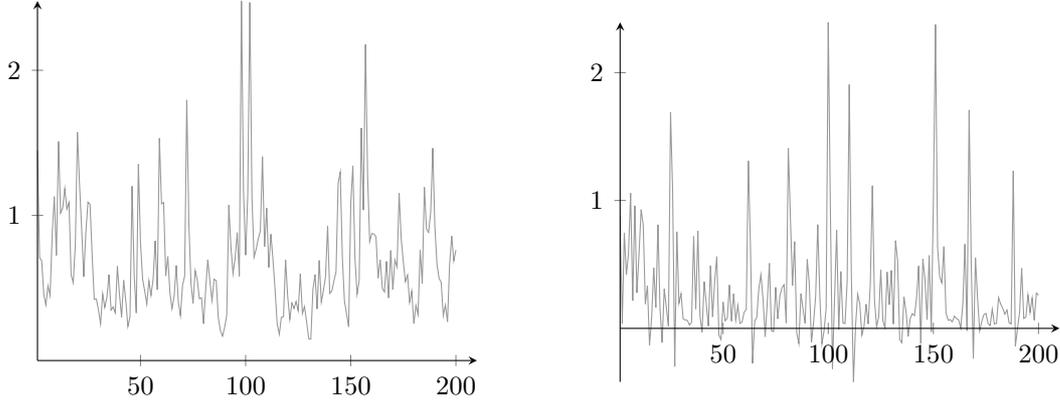

\section{Non-negative CARMA processes}\label{CARMArelations}
Let $P(z) = z^p + a_1z^{p-1}+ \cdots + a_p$ and $Q(z) = b_0 + b_1z + \cdots + b_{q-1}z^{q-1} + z^q$ be two real monic polynomials with $p > q$, and assume that $P$ has no zeroes on $\mathbb{C}_+$ (causality). Define the companion matrix
\begin{equation*}
{\boldsymbol A} = \begin{pmatrix}
0 & 1 & 0 & \cdots & 0 \\
0 & 0 & 1 &\cdots & 0\\
\vdots & \vdots & \ddots & \ddots & \vdots \\
0 & 0 & \cdots & 0 &1 \\
-a_p & -a_{p-1} & \cdots & -a_{2} & -a_{1}
\end{pmatrix},
\end{equation*}
and let ${\boldsymbol b} = (b_0,b_1,\dots, b_{q-1}, 1,0,\dots, 0)^\top$ and ${\boldsymbol e}_p = (0,0,\dots, 0, 1)^\top$ (both being elements of $\mathbb{R}^p$). The causal CARMA($p,q$) process $(Y_t)_{t\in \mathbb{R}}$ driven by the subordinator $(L_t)_{t\in \mathbb{R}}$, and associated to the autoregressive polynomial $P$ and moving average polynomial $Q$, is defined by
\begin{equation}\label{solutionCARMA}
Y_t = \int_{-\infty}^t {\boldsymbol b}^\top e^{{\boldsymbol A}(t-s)}{\boldsymbol e}_p\, \dd L_s,\qquad t \in \mathbb{R}.
\end{equation}
Alternatively, the kernel $g(t) = {\boldsymbol b}^\top e^{{\boldsymbol A}t}{\boldsymbol e}_p$ can be characterized in the frequency domain by the relation
\begin{equation}\label{fourierCARMA}
\int_0^\infty e^{-ity}g(t)\, \dd t = \frac{Q(iy)}{P(iy)},\qquad y \in \mathbb{R}.
\end{equation}
Due to the well-known form ${\boldsymbol X}_t = \int_{-\infty}^t e^{{\boldsymbol A} (t-s)}{\boldsymbol e}_p\, \dd L_s$ of the stationary Ornstein--Uhlenbeck process with drift parameter ${\boldsymbol A}$ and driven by $({\boldsymbol e}_pL_t)_{t\in \mathbb{R}}$ (see, for example, \cite{sato1994recurrence,Sato_OU}), \eqref{solutionCARMA} shows immediately that $(Y_t)_{t\in \mathbb{R}}$ admits the following state-space representation:
\begin{align*}
\dd {\boldsymbol X}_t &= {\boldsymbol A}{\boldsymbol X}_t\, \dd t + {\boldsymbol e}_p\, \dd L_t,\\
Y_t &= {\boldsymbol b}^\top {\boldsymbol X}_t.
\end{align*}
The intuition behind any of the above (equivalent) definitions of the CARMA process is that $(Y_t)_{t\in \mathbb{R}}$ should be the solution of the formal differential equation
\begin{equation}\label{heuristics}
P(D)Y_t = Q(D)D L_t,\qquad t \in \mathbb{R},
\end{equation}
where $D = \frac{\dd}{\dd t}$ denotes derivative with respect to $t$. (For instance, by heuristically computing the Fourier transform of $(Y_t)_{t\in \mathbb{R}}$ from \eqref{heuristics}, one can deduce that $Y_t$ should indeed take the form \eqref{solutionCARMA}.) In general, a wide range of theoretical and applied aspects of CARMA processes have been studied in the literature: for further details, see the survey of \citet{brockwell2014recent} and references therein.

Concerning non-negativity of CARMA processes driven by subordinators, the results are less conclusive; we briefly review existing results here. The simplest CARMA process, obtained with $P(z) = z+\lambda$ for $\lambda >0$ and $Q(z) = 1$, is the Ornstein--Uhlenbeck process, which (as noted in previous sections) is always non-negative when the driving noise $(L_t)_{t\in \mathbb{R}}$ is a subordinator. A particularly appealing property of CARMA processes, in contrast to general solutions of SDDEs, is that the Fourier transform \eqref{fourierCARMA} of the kernel $g$ is rational. To be specific, with $\alpha_1,\dots, \alpha_p$ and $\beta_1,\dots, \beta_q$ being the zeroes of $P$ and $Q$, respectively, \citet{ball1994completely} showed that $g$ is non-negative if 
\begin{align}
\begin{aligned}\label{CARMAsufficiency}
\alpha_1,\dots, \alpha_p,\beta_1,\dots,\beta_q&\in \{z \in \mathbb{C}\, :\, \Re (z)<0,\, \Im (z)=0\}\\
\text{and} \qquad
\sum_{j=1}^k \alpha_j &\geq \sum_{j=1}^k \beta_j\qquad \text{for $k=1,\dots, q$},
\end{aligned}
\end{align}
where the latter condition assumes that the zeroes are ordered such that $\alpha_1\geq \dots \geq \alpha_p$ and $\beta_1\geq \dots \geq \beta_q$. This result was later complemented by \cite{tsai2005note}, which gives two conditions (one necessary and one sufficient) for CARMA($p,0$) or, simply, CAR($p$) processes to be non-negative. Furthermore, it is argued that \eqref{CARMAsufficiency} is both necessary and sufficient to ensure non-negativity of CARMA($2,1$) processes. For many purposes, but not all, CARMA($p,p-1$) processes are the most useful in practice; for instance, CARMA($p,q$) processes have differentiable sample paths when $q<p-1$ (see \cite[Proposition~3.32]{marquardt2007multivariate}). When $q=p-1$ and $Q$ has no zeroes on $\mathbb{C}_+$ (invertibility), we saw in Example~\ref{CARMAprocesses} that the corresponding causal and invertible CARMA process $(Y_t)_{t\in \mathbb{R}}$ can be identified as the unique solution of an SDDE of the form \eqref{nonNegSDDE} with $\eta (\dd t) = f(t)\, \dd t$ and $f\colon [0,\infty) \to \mathbb{R}$ being characterized by
\begin{equation}\label{fDef}
\int_0^\infty e^{-ity}f(t)\, \dd t = \frac{R(iy)}{Q(iy)},\qquad y \in \mathbb{R}.
\end{equation}
Here $R(z)\coloneqq (z+\lambda)Q(z) - P(z)$ and $\lambda \in \mathbb{R}$ is uniquely determined by the condition $\deg (R) < q$; it is explicitly given by 
\begin{equation*}
\lambda = a_1 - b_{p-2} = -\alpha_p+\sum_{j=1}^{p-1}(\beta_j-\alpha_j).
\end{equation*}
Note that, in case the zeroes $\beta_1,\dots, \beta_q$ of $Q$ are distinct, Cauchy's residue theorem implies that
\begin{equation}\label{explicitf}
f(t) = -\sum_{j=1}^q \frac{P (\beta_j)}{Q'(\beta_j)}e^{\beta_j t},\qquad t \geq 0,
\end{equation}
$Q'$ being the derivative of $Q$ (see also \cite[Remark~5]{brockwell2014recent}). In view of Theorem~\ref{NonNegDelay}, it follows that $(Y_t)_{t\in \mathbb{R}}$ is non-negative if $f$ is so. To put it differently, while it is necessary and sufficient that $Q/P$ is completely monotone on $[0,\infty)$ for $(Y_t)_{t\in \mathbb{R}}$ to be non-negative, it is sufficient to verify complete monotonicity of $R/Q$, a rational function where both the numerator and denominator are of lower order than the original one. We state this finding in the following result.

\begin{theorem}\label{CARMAasSDDEs}
	Let $(Y_t)_{t\in \mathbb{R}}$ be a causal and invertible CARMA($p,p-1$) process associated to the pair $(P,Q)$, and let $R$ be given as above. Then $(Y_t)_{t\in \mathbb{R}}$ is non-negative if $R/Q$ is completely monotone on $[0,\infty)$ or, equivalently, if the function $f$ characterized by \eqref{fDef} is non-negative.
\end{theorem}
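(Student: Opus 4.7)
The plan is to assemble the result from pieces already established in Example~\ref{CARMAprocesses}, Theorem~\ref{NonNegDelay}, and Bernstein's theorem on completely monotone functions.

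First, I would invoke the discussion preceding the theorem statement: a causal and invertible CARMA($p,p-1$) process is the unique stationary solution of the SDDE \eqref{nonNegSDDE} with delay measure $\phi(\mathrm{d} t) = -\lambda\, \delta_0(\mathrm{d} t) + f(t)\, \mathrm{d} t$, where $\lambda \in \mathbb{R}$ and $f$ is characterized by \eqref{fDef}. Existence and uniqueness of this SDDE solution is guaranteed by Theorem~\ref{existenceTheorem} once $h_\phi(z) \neq 0$ on $\mathbb{C}_+$, which is equivalent to $P$ having no zeroes in $\mathbb{C}_+$ (using $h_\phi(z) = (z+\lambda) - R(z)/Q(z) = P(z)/Q(z)$ on $\mathbb{C}_+$, together with $Q$'s invertibility). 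I would also briefly note that $f$ decays exponentially (by \eqref{explicitf} when the $\beta_j$ are distinct, and by a similar partial-fractions argument in general), so the moment condition \eqref{phiMoments} is trivially met.

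Next, I would prove the stated equivalence: $R/Q$ is completely monotone on $[0,\infty)$ if and only if $f \geq 0$ almost everywhere. Rewriting \eqref{fDef} by replacing $iy$ with a real variable $x \geq 0$, and using analyticity of both sides on $\mathbb{C}_+$ together with continuity up to the boundary, we obtain
\begin{equation*}
\int_0^\infty e^{-xt} f(t)\, \mathrm{d} t = \frac{R(x)}{Q(x)},\qquad x \geq 0,
\end{equation*}
so $R/Q$ on $[0,\infty)$ is the Laplace transform of $f(t)\, \mathrm{d} t$. If $f\geq 0$, then each derivative of $R/Q$ has the representation $\tfrac{\mathrm{d}^n}{\mathrm{d} x^n}R(x)/Q(x) = \int_0^\infty (-t)^n e^{-xt} f(t)\, \mathrm{d} t$, whence complete monotonicity is immediate. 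Conversely, if $R/Q$ is completely monotone, Bernstein's theorem produces a non-negative Radon measure $\mu$ on $[0,\infty)$ with $\int_0^\infty e^{-xt}\, \mu(\mathrm{d} t) = R(x)/Q(x)$; uniqueness of the Laplace transform then forces $\mu(\mathrm{d} t) = f(t)\, \mathrm{d} t$, so $f \geq 0$ almost everywhere.

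Finally, with $f \geq 0$ established, the delay measure $\phi$ restricted to $(0,\infty)$ equals $f(t)\, \mathrm{d} t$, a non-negative measure, so the sufficient condition \eqref{theoremDelayCond} of Theorem~\ref{NonNegDelay} is satisfied. Invoking that theorem yields non-negativity of $(Y_t)_{t\in\mathbb{R}}$.

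I expect the least routine step to be justifying the passage from the Fourier-transform characterization \eqref{fDef} to the Laplace-transform identity valid on $[0,\infty)$, together with the careful use of Bernstein's theorem to conclude $f \geq 0$ from complete monotonicity of $R/Q$; everything else follows by directly quoting Theorems~\ref{existenceTheorem} and~\ref{NonNegDelay} and the construction from Example~\ref{CARMAprocesses}.
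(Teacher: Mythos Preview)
Your proposal is correct and follows exactly the route the paper takes: the paper does not give a separate proof of Theorem~\ref{CARMAasSDDEs} but regards it as an immediate consequence of the discussion preceding the statement, namely the identification of the CARMA($p,p-1$) process as an SDDE solution (Example~\ref{CARMAprocesses}), the application of Theorem~\ref{NonNegDelay} once $f\geq 0$, and Bernstein's theorem for the equivalence between non-negativity of $f$ and complete monotonicity of its Laplace transform $R/Q$. Your additional care in passing from the Fourier characterization \eqref{fDef} to the Laplace transform on $[0,\infty)$ is more explicit than what the paper spells out, but the underlying argument is the same.
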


\begin{remark}\label{CARMApq}
	In situations where $q<p-1$, Theorem~\ref{CARMAasSDDEs} can still be useful for obtaining non-negative CARMA($p,q$) processes as it can be applied in conjunction with the results of \cite{tsai2005note}. To be specific, suppose that
	\begin{enumerate}[(i)]
	 \item\label{point1} Theorem~\ref{CARMAasSDDEs} applies to the CARMA($q+1,q$) process associated with a given the pair $(P_1,Q)$, and
	 \item\label{point2} one of the sufficient conditions of \cite[Theorem~1]{tsai2005note} applies to the CAR($p-q-1$) process associated with a given autoregressive polynomial $P_2$.
	 \end{enumerate}
 	Then the CARMA($p,q$) process associated with the pair $(P_1\times P_2,Q)$ is non-negative ($\times$ denoting multiplication). This follows immediately from the fact that its kernel is the convolution between the kernels of the two moving averages associated to \ref{point1} and \ref{point2}.
\end{remark}

\begin{example}\label{CARMA21NonNeg}
	Suppose that $(Y_t)_{t\in\mathbb{R}}$ is a causal and invertible CARMA($2,1$) process, so that its polynomials $P$ and $Q$ are representable as $P(z) = (z-\alpha)(z-\beta)$ and $Q(z) = z- \gamma$ for suitable $\alpha,\beta \in \{z \in \mathbb{C}\, :\, \Re(z) <0\}$ and $\gamma \in (-\infty,0)$. For $(Y_t)_{t\in \mathbb{R}}$ to be non-negative it follows from \eqref{CARMAsufficiency} that it is necessary and sufficient that 
	\begin{equation}\label{CARMA21their}
	\Im (\alpha) = \Im (\beta)=0\qquad \text{and}\qquad \gamma \leq \max\{\alpha,\beta \}.
	\end{equation}
	 On the other hand, from \eqref{explicitf} we have $f(t) = - P(\gamma)e^{\gamma t}$, and so one would need to require that $P(\gamma) = (\gamma-\alpha)(\gamma-\beta)\leq 0$ in order to apply Theorem~\ref{CARMAasSDDEs}. Imposing this condition is equivalent to assuming that 
	\begin{equation}\label{CARMA21our}
	\Im (\alpha) = \Im (\beta) = 0\qquad  \text{and} \qquad \min \{\alpha, \beta\}\leq \gamma \leq \max \{\alpha, \beta\}.
	\end{equation}
	While the first restriction in \eqref{CARMA21our} is necessary, the second is not. In general, we see that \eqref{CARMA21their} and \eqref{CARMA21our} differ the most when $\vert \alpha- \beta\vert$ is small. In Figure~\ref{CARMA21figure} we have marked the feasible area for $\gamma$ implied by \eqref{CARMA21their} and \eqref{CARMA21our}, respectively, for a given polynomial $P$ with zeroes $\alpha, \beta \in (-\infty ,0)$.
	
	\begin{figure}
		\centering
		\begin{minipage}{0.65\textwidth}
			\begin{tikzpicture}
			\begin{axis}[
			axis lines=middle,
			width=\linewidth, 
			xtick = {0.6715,1.9667},
			xticklabels = {$\alpha$, $\beta$},
			ytick = {0},
			xticklabel style={/pgf/number format/fixed},
			every axis plot/.append style={thick},
			axis line style = thick,
			ymax = 0.55,
			xmin = -0.2,
			xmax = 2.32,
			]
			\addplot[darkgray!50] table [col sep=comma,x=a,y=b,mark=none] {CARMA21p.csv};
			\draw[very thick, red!50] (axis cs: 0.6715,0)--(axis cs: 1.9667,0);
			\draw[very thick, red!50, loosely dashed] (axis cs: -1.5,0)--(axis cs: 0.6715,0);
			\end{axis}
			\end{tikzpicture}
		\end{minipage}
		\caption{An autoregressive polynomial $P$ with zeroes $\alpha, \beta \in (-\infty,0)$ (gray) together with the feasible values for the zero $\gamma \in (-0,\infty)$ of the corresponding moving average polynomial $Q$ implied by \eqref{CARMA21their} (red: solid and dashed) and \eqref{CARMA21our} (red: solid).}\label{CARMA21figure}
	\end{figure}
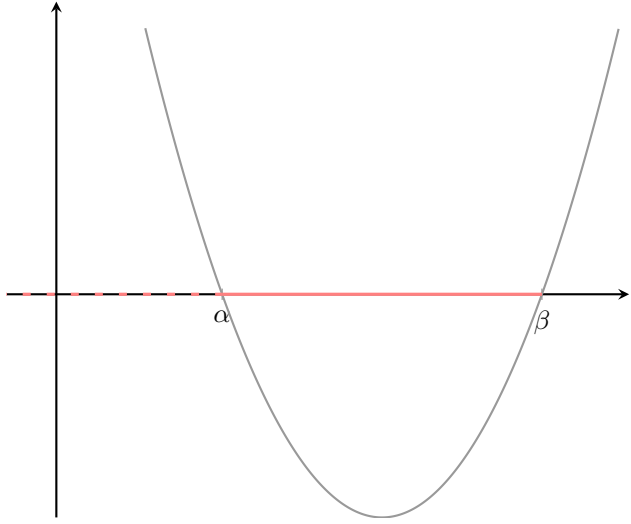
\end{example}

While the condition of Theorem~\ref{CARMAasSDDEs} falls short in the context of CARMA($2,1$) processes, Corollary~\ref{CARMA32processes} below shows that it extends the class of CARMA($p,p-1$) processes which is known to be non-negative as soon as $p\geq 3$. Specifically, this result gives sufficient conditions ensuring that CARMA($3,2$) processes are non-negative in situations where those of \cite{ball1994completely,tsai2005note} do not (and the latter are, to the best our knowledge, the only available easy-to-check conditions in the literature). One could aim for a similar result for a general $p \geq 3$, but this would involve more complicated expressions.

\begin{corollary}\label{CARMA32processes}
	Let $(Y_t)_{t\in \mathbb{R}}$ be a causal and invertible CARMA($3,2$) process with polynomials $P$ and $Q$, and let $\beta_1$ and $\beta_2$, $\Re(\beta_1)\geq \Re(\beta_2)$, denote the zeroes of $Q$. Then the function $f$ characterized by \eqref{fDef} is non-negative if and only if $\Im (\beta_1) =\Im (\beta_2) = 0$ and one of the following conditions holds:
	\begin{enumerate}[(i)]
		\item\label{condA} $\beta_1 >\beta_2$ and $P(\beta_1) \leq \min \{P(\beta_2),0\}$.
		\item\label{condB} $\beta_1 = \beta_2$ and $\max\{P(\beta_1),P'(\beta_1)\} \leq 0$.
	\end{enumerate}
	In particular, if either \ref{condA} or \ref{condB} is satisfied, then $(Y_t)_{t\in \mathbb{R}}$ is non-negative.
\end{corollary}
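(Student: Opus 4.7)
The plan is to obtain an explicit closed form for $f$ in all three subcases (real distinct, real double, complex conjugate) and then, for each form, read off exactly when the function stays non-negative on $[0,\infty)$. The final implication that $(Y_t)_{t\in\mathbb R}$ is then non-negative is immediate from Theorem~\ref{CARMAasSDDEs}.

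First, I would handle the two real subcases by using the partial fraction expansion of $R/Q$. When $\beta_1>\beta_2$ are real and distinct, formula~\eqref{explicitf} with $Q'(\beta_j)=\pm(\beta_1-\beta_2)$ gives
\begin{equation*}
f(t)=\frac{e^{\beta_2 t}}{\beta_1-\beta_2}\bigl(-P(\beta_1)e^{(\beta_1-\beta_2)t}+P(\beta_2)\bigr),\qquad t\ge 0.
\end{equation*}
Since $e^{\beta_2 t}/(\beta_1-\beta_2)>0$, non-negativity of $f$ reduces to non-negativity of the bracket. As a function of $t\ge 0$ the bracket is monotone (increasing if $P(\beta_1)<0$, decreasing if $P(\beta_1)>0$, constant if $P(\beta_1)=0$) with limit $-\operatorname{sign}(P(\beta_1))\cdot\infty$; analyzing the three monotonicity cases, plus the value at $t=0$, yields the pair of inequalities $P(\beta_1)\le 0$ and $P(\beta_1)\le P(\beta_2)$, i.e.\ condition~\ref{condA}. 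For the double root $\beta_1=\beta_2=\beta$, expanding $R(z)=R(\beta)+R'(\beta)(z-\beta)$ in the partial fraction decomposition of $R(z)/(z-\beta)^2$ and inverting the Fourier transform term by term gives
\begin{equation*}
f(t)=-e^{\beta t}\bigl(P'(\beta)+P(\beta)\,t\bigr),\qquad t\ge 0,
\end{equation*}
where I have used $R(\beta)=-P(\beta)$ and $R'(\beta)=-P'(\beta)$ (since $Q(\beta)=Q'(\beta)=0$). Non-negativity of the affine function $t\mapsto -(P'(\beta)+P(\beta)t)$ on $[0,\infty)$ is equivalent to $P'(\beta)\le 0$ and $P(\beta)\le 0$, i.e.\ condition~\ref{condB}.

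The main obstacle is the complex case, where I need to rule out \emph{all} complex-conjugate pairs $\beta_1=a+ib$, $\beta_2=a-ib$ with $b\neq 0$. Applying \eqref{explicitf} and grouping the two conjugate terms, together with $Q'(\beta_1)=2ib$, gives
\begin{equation*}
f(t)=-\frac{1}{b}\,e^{at}\,\Im\!\bigl(P(\beta_1)\,e^{ibt}\bigr)=-\frac{|P(\beta_1)|}{b}\,e^{at}\sin(bt+\theta),
\end{equation*}
where $\theta=\arg P(\beta_1)$. The key observation is that $P(\beta_1)\neq 0$: otherwise $\beta_1$ would be a common root of $P$ and $Q$, and since $P$ is real the conjugate $\beta_2$ would also be a common root, forcing $P=(z-c)Q$ for some $c\in\mathbb R$; but then $R=(z+\lambda)Q-P=(\lambda+c)Q$ would have $\deg R=2=\deg Q$, contradicting the definition of $\lambda$ (which forced $\deg R<\deg Q$). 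With $P(\beta_1)\neq 0$ the amplitude in the display above is non-zero, and since $\sin(bt+\theta)$ changes sign on every interval of length $\pi/|b|$, the function $f$ must take strictly negative values on $[0,\infty)$. Hence reality of the roots is necessary.

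Combining the three analyses yields the necessary-and-sufficient characterization of $f\ge 0$ claimed in the corollary. The concluding statement that either \ref{condA} or \ref{condB} implies non-negativity of $(Y_t)_{t\in\mathbb R}$ is then just Theorem~\ref{CARMAasSDDEs}.
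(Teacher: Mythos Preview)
Your handling of the two real cases is essentially identical to the paper's proof: the same explicit formulas for $f$ are obtained (via \eqref{explicitf} for distinct roots, and a direct computation of $R$ for the double root), and the conditions are read off from the values at $t=0$ and the behavior as $t\to\infty$.

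For the complex case the paper simply appeals to \cite[Remark~2]{tsai2005note}, whereas you supply a self-contained argument through the sine representation of $f$. This is a useful elaboration, but your justification of $P(\beta_1)\neq 0$ is flawed. You claim that if $P(\beta_1)=0$ then $P=(z-c)Q$ and hence $R=(\lambda+c)Q$ has degree $2$, contradicting $\deg R<\deg Q$. But $\lambda$ is \emph{defined} as the unique constant forcing $\deg R<\deg Q$; in the situation $P=(z-c)Q$ this gives $\lambda=-c$ and $R\equiv 0$, so there is no contradiction at all---and then $f\equiv 0$, which is trivially non-negative even though $\beta_1,\beta_2\notin\mathbb{R}$. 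The natural repair is to invoke the standard coprimality of $P$ and $Q$ (equivalently of $Q$ and $R$, cf.\ condition~(iii) in Example~\ref{CARMAprocesses}), which rules out $P(\beta_1)=0$ directly; with that in place your oscillation argument is correct and the conclusion follows.
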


\begin{example}\label{CARMA32NonNeg}
	Let $(Y_t)_{t\in \mathbb{R}}$ be the causal and invertible CARMA($3,2$) associated to real polynomials $P$ and $Q$ representable as
	\begin{equation*}
	P(z) = (z-\alpha_1)(z-\alpha_2)(z-\alpha_3)\quad \text{and} \quad Q(z) = (z-\beta_1)(z-\beta_2),\qquad z \in \mathbb{C},
	\end{equation*}
	for some $\alpha_1,\alpha_2,\alpha_3 \in \{z\in \mathbb{C}\, :\, \Re (z) <0\}$ and $\beta_1,\beta_2\in (-\infty,0)$. If one seeks to apply the result of \cite{ball1994completely,tsai2005note} stated in \eqref{CARMAsufficiency}, it must be required that
	\begin{equation}\label{CARMA32suff1}
	\Im (\alpha_j) = 0,\qquad \beta_1\leq \alpha_1,\qquad \text{and}\qquad \beta_1 + \beta_2 \leq \alpha_1 +\alpha_2.
	\end{equation}
	(Here, as in \eqref{CARMAsufficiency}, the zeroes are ordered such that $\alpha_1 \geq \alpha_2\geq \alpha_3$ and $\beta_1 \geq \beta_2$.) Comparing \eqref{CARMA32suff1} to the conditions obtained in Corollary~\ref{CARMA32processes}, it follows that they are quite different. In particular, Corollary~\ref{CARMA32processes} does not require that $\alpha_1$, $\alpha_2$, and $\alpha_3$ are real numbers, and even if this is the case, the conditions of this result are sometimes met while those of \eqref{CARMA32suff1} are not, and vice versa. To give a simple example of the former case, suppose that $\alpha_1 >\alpha_2 = \alpha_3$ and $\beta_1 = \beta_2$. If $\beta_1$ is larger than $\alpha_2$, but smaller than the local extremum $x^\ast$ of $P$ located in the interval $(\alpha_2,\alpha_1)$, it follows that both $P(\beta_1)\leq 0$ and $P'(\beta_1)\leq 0$. It is easy to check that $x^\ast = (2\alpha_1+ \alpha_2)/3$, so we conclude that the conditions of Corollary~\ref{CARMA32processes} are satisfied while those of \eqref{CARMA32suff1} are not whenever
	\begin{equation*}
	\frac{\alpha_1 + \alpha_2}{2}< \beta_1 \leq \frac{\alpha_1 + \alpha_2}{2} + \frac{\alpha_1 - \alpha_2}{6}.
	\end{equation*}
	 On the other hand, a simple example where \eqref{CARMA32suff1} applies, but Corollary~\ref{CARMA32processes} does not, is when $\alpha_3 < \beta_1<\alpha_2$, since this implies $P(\beta_1)>0$.
\end{example}

\section{Multivariate extensions}\label{multivariate}
In this section we extend the results of Section~\ref{SDDEnonNeg} to multivariate SDDEs or, in short, MSDDEs which were studied in \cite{basse2019multivariate}. To be specific, for a given $d \geq 2$ we let ${\boldsymbol L}_t = (L^1_t,\dots, L^d_t)^\top$, $t\in \mathbb{R}$, be a $d$-dimensional subordinator with $\mathbb{E}[\lVert {\boldsymbol L}_1 \rVert]<\infty$ and ${\boldsymbol\phi} = (\phi_{jk})$ a $d\times d$ matrix-valued signed measure on $[0,\infty)$ with
\begin{equation*}
\int_0^\infty t^2\, \vert \phi_{jk}\vert (\dd t)<\infty,\qquad j,k=1,\dots, d.
\end{equation*}
A stochastic process ${\boldsymbol X}_t = (X^1_t, \dots, X^d_t)^\top$, $t\in \mathbb{R}$, is a solution of the corresponding MSDDE if it is stationary, has finite first moments (that is, $\mathbb{E}[\lVert {\boldsymbol X}_0 \rVert]<\infty$), and satisfies
\begin{equation}\label{MSDDE}
X^j_t - X^j_s = \sum_{k=1}^d \int_s^t\int_{[0,\infty)}X^k_{u-v}\, \phi_{jk} (\dd v)\, \dd u + L^j_t -L^j_s,\qquad s <t,
\end{equation}
for $j=1,\dots, d$. In line with \eqref{NonNegDelay} we use the decomposition ${\boldsymbol\phi} = -{\boldsymbol\lambda} \delta_0 + {\boldsymbol\eta}$, where ${\boldsymbol\lambda} \coloneqq - {\boldsymbol\phi} (\{0\})$ and ${\boldsymbol\eta} \coloneqq {\boldsymbol\phi} (\: \cdot \: \cap (0,\infty))$. If we define the convolution $(\mu\ast f) (t) = (f \ast \mu) (t)  \coloneqq \int f(t-s)\, \mu (\dd s)$ between a signed measure and a real-valued function $f$ (assuming that the integral exists) and extend the definition to matrix-valued quantities by the usual rules of matrix multiplication, \eqref{MSDDE} can be written compactly as
\begin{equation*}
\dd {\boldsymbol X}_t = - {\boldsymbol \lambda} {\boldsymbol X}_t \,\dd t + ({\boldsymbol \eta}\ast {\boldsymbol X}) (t)\, \dd t + \dd {\boldsymbol L}_t,\qquad t \in \mathbb{R}.
\end{equation*}
Among stationary processes which can be identified as solutions of \eqref{MSDDE} are the multivariate Ornstein--Uhlenbeck process (${\boldsymbol \eta} \equiv {\boldsymbol 0}$) and, more generally, multivariate CARMA processes \cite{marquardt2007multivariate} (${\boldsymbol \eta}(\dd t) = {\boldsymbol f}(t)\, \dd t$ with ${\boldsymbol f}$ being of exponential type as in~\eqref{CARMAdelay}). For further details, we refer to \cite{basse2019multivariate}. In general, solutions of MSDDEs are related to a function which is similar to \eqref{hFunction}, namely
\begin{equation}\label{hFunctionMulti}
{\boldsymbol h}_{\phi} (z) = z{\boldsymbol I}_d - \int_{[0,\infty)}e^{-zt}\, {\boldsymbol \phi} (\dd t),\qquad z\in \mathbb{C}_+,
\end{equation}
${\boldsymbol I}_d$ being the $d\times d$ identity matrix. In particular, it was shown in \cite{basse2019multivariate} that if $\det ({\boldsymbol h}_{\phi}(z))\neq 0$ for all $z\in \mathbb{C}_+$, the unique solution $({\boldsymbol X}_t)_{t\in \mathbb{R}}$ is given by
\begin{equation}\label{solutionMulti}
{\boldsymbol X}_t = \int_{-\infty}^t {\boldsymbol g}_\phi (t-s)\, \dd {\boldsymbol L}_s, \qquad t \in \mathbb{R},
\end{equation}
where ${\boldsymbol g}_\phi\colon [0,\infty)\to \mathbb{R}^{d\times d}$ is characterized by
\begin{equation}\label{gCharacterize}
\int_0^\infty e^{-ity}{\boldsymbol g}_\phi (t)\, \dd t = {\boldsymbol h}_\phi (iy)^{-1},\qquad y \in \mathbb{R}.
\end{equation}
In Theorem~\ref{NonNegDelayMulti}, which is a generalization of Theorem~\ref{NonNegDelay}, we give sufficient conditions for the solution $({\boldsymbol X}_t)_{t\in \mathbb{R}}$ to have non-negative entries. Before formulating this result we recall the definition of the so-called $M$-matrices: ${\boldsymbol A}\in \mathbb{R}^{d\times d}$ is called an $M$-matrix if it can be expressed as ${\boldsymbol A} = \alpha {\boldsymbol I}_d - {\boldsymbol B}$ for some $\alpha \in [0,\infty)$ and ${\boldsymbol B}\in \mathbb{R}^{d\times d}$ with non-negative entries and spectral radius $\rho ({\boldsymbol B})\leq \alpha$. The main point in introducing these is their relation to entrywise non-negativity of the matrix exponential $e^{-{\boldsymbol A}t}$ for all $t\geq 0$ (see Lemma~\ref{NonNegMatExp}). For further details and other characterizations of $M$-matrices, we refer to \cite{berman1994nonnegative,plemmons1977m} and references therein.

\begin{theorem}\label{NonNegDelayMulti}
	Let ${\boldsymbol h}_\phi$ be defined as in \eqref{hFunctionMulti} and assume that $\det ({\boldsymbol h}_\phi(z)) \neq 0$ for all $z \in \mathbb{C}_+$. Suppose further that all the entries of ${\boldsymbol \eta}$ are non-negative measures and that ${\boldsymbol \lambda}$ is an $M$-matrix. Then the entries of the unique solution $({\boldsymbol X}_t)_{t \in \mathbb{R}}$ to \eqref{SDDErelation}, which is given by \eqref{solution}, are non-negative in the sense that $X^j_t \geq 0$ for $j=1,\dots, d$ almost surely for each fixed $t\in \mathbb{R}$. 
\end{theorem}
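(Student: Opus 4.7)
The plan is to reduce the theorem to the entrywise non-negativity of the kernel $\boldsymbol{g}_\phi$ appearing in \eqref{gCharacterize}. Once $(g_\phi)_{jk}(t)\geq 0$ for almost every $t\geq 0$ and every $j,k$, the representation $X^j_t = \sum_{k=1}^d \int_{-\infty}^t (g_\phi)_{jk}(t-s)\,dL^k_s$ exhibits $X^j_t$ as a sum of integrals of non-negative functions against one-dimensional subordinators with non-negative increments, which immediately gives $X^j_t\geq 0$ almost surely.

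To establish non-negativity of $\boldsymbol{g}_\phi$, I use the decomposition $\boldsymbol{\phi} = -\boldsymbol{\lambda}\delta_0 + \boldsymbol{\eta}$ to write $\boldsymbol{h}_\phi(z) = (z\boldsymbol{I}_d+\boldsymbol{\lambda}) - \mathcal{L}_{\boldsymbol{\eta}}(z)$, where $\mathcal{L}_{\boldsymbol{\eta}}(z) = \int_{(0,\infty)} e^{-zt}\,\boldsymbol{\eta}(dt)$. The moment condition makes the entries of $\boldsymbol{\eta}$ finite, so $\mathcal{L}_{\boldsymbol{\eta}}(x)\to \boldsymbol{0}$ as $x\to\infty$ along the real line while $(x\boldsymbol{I}_d+\boldsymbol{\lambda})^{-1}\to\boldsymbol{0}$ as well; hence for real $x$ large enough the matrix $(x\boldsymbol{I}_d+\boldsymbol{\lambda})^{-1}\mathcal{L}_{\boldsymbol{\eta}}(x)$ has spectral radius strictly less than one and the Neumann expansion
\begin{equation*}
\boldsymbol{h}_\phi(x)^{-1} = \sum_{n=0}^\infty (x\boldsymbol{I}_d+\boldsymbol{\lambda})^{-1}\bigl[\mathcal{L}_{\boldsymbol{\eta}}(x)(x\boldsymbol{I}_d+\boldsymbol{\lambda})^{-1}\bigr]^n
\end{equation*}
converges absolutely. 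By Lemma~\ref{NonNegMatExp} the $M$-matrix hypothesis on $\boldsymbol{\lambda}$ yields entrywise non-negativity of $e^{-\boldsymbol{\lambda}t}$ for all $t\geq 0$, so $(x\boldsymbol{I}_d+\boldsymbol{\lambda})^{-1} = \int_0^\infty e^{-xt}e^{-\boldsymbol{\lambda}t}\,dt$ is the Laplace transform of an entrywise non-negative matrix-valued function, while $\mathcal{L}_{\boldsymbol{\eta}}(x)$ is the Laplace transform of the entrywise non-negative measure $\boldsymbol{\eta}$. Each summand is therefore the Laplace transform of an iterated convolution of entrywise non-negative matrix-valued objects, and hence is itself the Laplace transform of an entrywise non-negative function $\tilde{\boldsymbol{g}}_n(t)$.

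Setting $\tilde{\boldsymbol{g}} \coloneqq \sum_{n\geq 0} \tilde{\boldsymbol{g}}_n$, monotone convergence applied to the Laplace transforms at a fixed large $x$ both guarantees $\tilde{\boldsymbol{g}}$ is integrable and yields $\boldsymbol{h}_\phi(x)^{-1} = \int_0^\infty e^{-xt}\tilde{\boldsymbol{g}}(t)\,dt$ for those $x$. On the other hand, $\boldsymbol{g}_\phi$ is itself integrable (as is implicit in the well-posedness of \eqref{solutionMulti} with $\mathbb{E}[\lVert \boldsymbol{X}_0\rVert]<\infty$), so its Fourier transform $\boldsymbol{h}_\phi(iy)^{-1}$ extends holomorphically to a right half-plane where it coincides with the Laplace transform of $\tilde{\boldsymbol{g}}$; uniqueness of Laplace transforms then forces $\tilde{\boldsymbol{g}} = \boldsymbol{g}_\phi$ almost everywhere, proving entrywise non-negativity of $\boldsymbol{g}_\phi$.

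The main obstacle I anticipate is the bookkeeping in passing from the Laplace-side identification of $\tilde{\boldsymbol{g}}$ to the Fourier-side characterization \eqref{gCharacterize} of $\boldsymbol{g}_\phi$; convergence of the Neumann series for large real $x$ is easy, but integrability of $\tilde{\boldsymbol{g}}$ on $[0,\infty)$ and the analytic-continuation/uniqueness step require a little care. Note the $M$-matrix hypothesis is used exactly once, to secure $e^{-\boldsymbol{\lambda}t}\geq 0$, which is precisely the feature that makes this sufficient condition mirror \eqref{theoremDelayCond} in the univariate case.
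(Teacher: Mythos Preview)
Your approach is correct and genuinely different from the paper's. The paper does not use a Neumann (resolvent) series at all. Instead, it truncates the delay measure to $\boldsymbol{\phi}_\varepsilon \coloneqq -\boldsymbol{\lambda}\delta_0 + \boldsymbol{\eta}(\,\cdot\,\cap(\varepsilon,\infty))$, shows that for small $\varepsilon$ the associated kernel $\boldsymbol{g}_{\phi_\varepsilon}$ equals $e^{-\boldsymbol{\lambda}t}$ on $[0,\varepsilon]$ (hence is entrywise non-negative there by the $M$-matrix property), and then propagates non-negativity to all of $[0,\infty)$ by induction via a renewal-type identity (Lemma~\ref{induction}). Finally, it passes to the limit $\varepsilon\to 0$ using Plancherel and a dominated-convergence bound to recover non-negativity of $\boldsymbol{g}_\phi$.

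Your resolvent expansion is more direct and sidesteps both the truncation and the inductive lemma: once $e^{-\boldsymbol{\lambda}t}\geq 0$ entrywise, every convolution power in the series is manifestly non-negative. The price is the identification step you flag. That step can be made clean by appealing to the Pringsheim--Landau phenomenon for Laplace transforms of non-negative functions: since $\tilde{\boldsymbol{g}}\geq 0$ and its Laplace transform coincides with the holomorphic function $\boldsymbol{h}_\phi(z)^{-1}$ on $\{\Re z> x_0\}$, the abscissa of convergence is a singularity of the transform, and $\boldsymbol{h}_\phi(z)^{-1}$ has none on $[0,\infty)$ by hypothesis. Hence the Laplace transform of $\tilde{\boldsymbol{g}}$ converges and equals $\boldsymbol{h}_\phi(z)^{-1}$ on the whole open right half-plane; monotone convergence then yields $\tilde{\boldsymbol{g}}\in L^1$ with Fourier transform $\boldsymbol{h}_\phi(iy)^{-1}$, forcing $\tilde{\boldsymbol{g}}=\boldsymbol{g}_\phi$ a.e. The paper's route trades this analytic-continuation bookkeeping for the auxiliary Lemma~\ref{induction} and an $L^2$ limit argument; both hinge on the $M$-matrix assumption at exactly the same point, namely the non-negativity of $e^{-\boldsymbol{\lambda}t}$.
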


\begin{remark}
	When comparing Theorem~\ref{NonNegDelayMulti} to Theorem~\ref{NonNegDelay}, it seems that we have to impose an additional assumption of ${\boldsymbol \lambda}$ being an $M$-matrix, which was not needed in the univariate case. However, requiring that the one-dimensional quantity $\lambda = -\phi (\{0\})$ is an $M$-matrix would naturally be interpreted as a non-negativity constraint, but this is automatically satisfied under the stated assumptions on $h_\phi$, cf.~Remark~\ref{nonNegRemark}.
\end{remark}

In the same way as we relied on the findings of Section~\ref{SDDEnonNeg} to obtain conditions for a CARMA process to be non-negative in Section~\ref{CARMArelations}, Theorem~\ref{NonNegDelayMulti} can be used in conjunction with the relation between MSDDEs and multivariate CARMA processes outlined in \cite{basse2019multivariate} to obtain similar conditions in the multivariate setting.

\section{Proofs}\label{proofs}
\begin{proof}[Proof of Theorem~\ref{nonNegativeOfSDDEs}]
	The equivalence between \ref{p1} and \ref{p2} is an immediate consequence of stationarity of $(X_t)_{t\in \mathbb{R}}$. The fact that \ref{p3} implies \ref{p2} is rather obvious---this is also readily seen by representing the subordinator $(L_t)_{t\in \mathbb{R}}$ as a cumulative sum of its positive jumps plus a non-negative drift. To see that \ref{p2} implies \ref{p3}, note by \cite{rosSpec} that $X_t$ is infinitely divisible with a Lévy measure $\nu_X$ given by
	\begin{equation*}
	\nu_X (B)\coloneqq \bigl(\text{Leb}\times \nu\bigr)\bigl(\{(s,x)\in [0,\infty)\times \mathbb{R}\, :\, xg_\phi (s)\in B\setminus\{0\}\}\bigr),
	\end{equation*}
	where $\text{Leb}(\: \cdot\:)$ refers to the Lebesgue measure. Since $X_t\geq 0$ almost surely, $\nu_X$ is concentrated on $(0,\infty)$, and hence we deduce that
	\begin{equation}\label{zeroLebesgue}
	0 = \nu_X ((-\infty,0)) \geq \text{Leb} (\{s\in [0,\infty)\, :\, g_\phi (s) <0\})\nu ((0,\infty)).
	\end{equation}
	Since $(L_t)_{t\in \mathbb{R}}$ is a subordinator and $\nu$ is non-zero, $\nu ((0,\infty))>0$. By combining this observation with \eqref{zeroLebesgue} we conclude that $g_\phi$ is non-negative almost everywhere. In view of Bernstein's theorem on monotone functions \cite{bernstein1929fonctions}, which states that $g_\phi$ is non-negative if and only if its Laplace transform is completely monotone on $[0,\infty)$, we have that \ref{p3} holds if and only if \ref{p4} does, and this completes the proof.
	
\end{proof}

\begin{proof}[Proof of Theorem~\ref{NonNegDelay}]
	This follows immediately from the discussion in Remark~\ref{diBruno} or by observing that $x\mapsto h_\phi (x)$ is a Bernstein function, and hence its reciprocal $x\mapsto 1/h_\phi (x)$ must be completely monotone (see \cite[Theorem~5.4]{berg2008stieltjes}).
\end{proof}

\begin{proof}[Proof of Corollary~\ref{CARMA32processes}] Since $f$ takes the form \eqref{fDef} and $\deg (Q) = 2$, it follows from \cite[Remark~2]{tsai2005note} that, necessarily, $\Im (\beta_1) = \Im (\beta_2) = 0$ (as we also discussed in relation to \eqref{CARMAsufficiency}). If $\beta_1 > \beta_2$, we may use \eqref{explicitf} and represent $f$ explicitly as
\begin{equation*}
f(t) = \frac{1}{\beta_1-\beta_2}\bigl(P(\beta_2)e^{\beta_2 t} - P(\beta_1)e^{\beta_1 t} \bigr) ,\qquad t \geq 0.
\end{equation*}
Here we have also used the fact that $Q'(\beta_1)  = - Q'(\beta_2) = \beta_1  - \beta_2$. By considering $t=0$ and $t\to \infty$ we conclude that $f (t) \geq 0$ for all $t$ if and only if both $P(\beta_1)\leq P(\beta_2)$ and $P(\beta_1)\leq 0$, and this thus establishes \ref{condA}. If $\beta_1 = \beta_2\eqqcolon \beta$, we observe initially that 
\begin{align*}
R(z) &=  (z+a_1+2\beta)(z-\beta)^2 - P(z)\\
&= 2\beta^3 + a_1\beta^2 - a_3 -(3 \beta^2+ 2\beta a_1 - a_2) z\\
&= P'(\beta)\beta-P(\beta) -P'(\beta)z.
\end{align*}
Since the function having Fourier transform $y \mapsto (iy-\beta)^{-2}$ is $t\mapsto te^{\beta t}$, it follows from \eqref{fDef} and the above expression for $R$ that
\begin{equation*}
f(t) = (P'(\beta)\beta - P(\beta))te^{\beta t} - P'(\beta)(\beta t e^{\beta t}+e^{\beta t})= - (P(\beta)t + P'(\beta))e^{\beta t}.
\end{equation*}
By considering $t=0$ and $t\to \infty$ once again we deduce that $f$ is non-negative on $[0,\infty)$ if and only if $P(\beta)\leq 0$ and $P'(\beta)\leq 0$, which concludes the proof.	
\end{proof}

\noindent Before turning to the proof of Theorem~\ref{NonNegDelayMulti} we will need a couple of auxiliary lemmas. In relation to the formulation of the first result we recall the notation
\begin{equation*}
({\boldsymbol \mu}\ast {\boldsymbol f})_{jk} = \sum_l \int f_{lk}(\: \cdot \: - t)\, \mu_{jl} (\dd t)
\end{equation*}
for a matrix-valued function ${\boldsymbol f} = (f_{jk})$ and a matrix-valued signed measure ${\boldsymbol \mu} = (\mu_{jk})$ of suitable dimensions and such that the involved integrals are well-defined.
\begin{lemma}\label{induction}
	Suppose that $\det ({\boldsymbol h}_\phi (z)) \neq 0$ for all $z\in \mathbb{C}_+$ and let ${\boldsymbol g}_\phi \colon [0,\infty)\to \mathbb{R}^{d\times d}$ be characterized by \eqref{gCharacterize}. Then it holds that
	\begin{equation*}
	{\boldsymbol g}_\phi (t) = {\boldsymbol g}_\phi (t-s) {\boldsymbol g}_\phi (s) + \int_s^t {\boldsymbol g}_\phi (t-u)
	( {\boldsymbol \phi} \ast ({\boldsymbol g}_\phi \mathds{1}_{[0,s]}) ) (u)\, \dd u
	\end{equation*}
	for arbitrary $s,t\in [0,\infty)$ with $s<t$.
\end{lemma}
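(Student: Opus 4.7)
The plan is to verify the identity at the level of Laplace transforms in $t$ (with $s$ fixed as a parameter) on both sides and invoke uniqueness, using as the main input the Volterra equation
\begin{equation*}
{\boldsymbol g}_\phi(t) = {\boldsymbol I}_d + \int_0^t ({\boldsymbol \phi}\ast {\boldsymbol g}_\phi)(u)\, \dd u, \qquad t \geq 0.
\end{equation*}
This equation is obtained by multiplying \eqref{gCharacterize} by $iy$ and using $iy\,{\boldsymbol I}_d = {\boldsymbol h}_\phi(iy) + \hat{{\boldsymbol \phi}}(iy)$ to rewrite it as $iy\,\hat{{\boldsymbol g}}_\phi(iy) = {\boldsymbol I}_d + \hat{{\boldsymbol \phi}}(iy)\hat{{\boldsymbol g}}_\phi(iy)$, and then inverting the Fourier transform; the required exponential decay of ${\boldsymbol g}_\phi$ follows from $\det({\boldsymbol h}_\phi(z))\neq 0$ on $\mathbb{C}_+$. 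In particular ${\boldsymbol g}_\phi(0)={\boldsymbol I}_d$ and ${\boldsymbol g}_\phi'(u) = ({\boldsymbol \phi}\ast{\boldsymbol g}_\phi)(u)$ for a.e.\ $u \geq 0$.

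For $z$ with $\Re(z)$ large, the Laplace transform in $t$ of the LHS restricted to $\{t\geq s\}$ equals ${\boldsymbol h}_\phi(z)^{-1} - \int_0^s e^{-zu}{\boldsymbol g}_\phi(u)\,\dd u$. On the RHS, the first term $t\mapsto {\boldsymbol g}_\phi(t-s){\boldsymbol g}_\phi(s)\mathds{1}_{[s,\infty)}(t)$ transforms by a time-shift to $e^{-zs}{\boldsymbol h}_\phi(z)^{-1}{\boldsymbol g}_\phi(s)$. The integral term is a classical convolution of ${\boldsymbol g}_\phi$ with $\Psi(u):=({\boldsymbol \phi}\ast({\boldsymbol g}_\phi\mathds{1}_{[0,s]}))(u)\mathds{1}_{[s,\infty)}(u)$, whose transform equals ${\boldsymbol h}_\phi(z)^{-1}\mathcal{L}\{\Psi;z\}$.

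To compute $\mathcal{L}\{\Psi;z\}$, write it as $\mathcal{L}\{{\boldsymbol \phi}\ast({\boldsymbol g}_\phi\mathds{1}_{[0,s]});z\} - \mathcal{L}\{({\boldsymbol \phi}\ast({\boldsymbol g}_\phi\mathds{1}_{[0,s]}))\mathds{1}_{[0,s)};z\}$. The first piece is $\hat{{\boldsymbol \phi}}(z)\int_0^s e^{-zu}{\boldsymbol g}_\phi(u)\,\dd u$ by the measure-function convolution theorem. For the second piece, note that for $u\in[0,s)$ and $v\in[0,u]$ one has $u-v\in[0,s)$, so $({\boldsymbol \phi}\ast({\boldsymbol g}_\phi\mathds{1}_{[0,s]}))(u) = ({\boldsymbol \phi}\ast{\boldsymbol g}_\phi)(u) = {\boldsymbol g}_\phi'(u)$ a.e.; integration by parts with ${\boldsymbol g}_\phi(0) = {\boldsymbol I}_d$ then yields $\int_0^s e^{-zu}{\boldsymbol g}_\phi'(u)\,\dd u = e^{-zs}{\boldsymbol g}_\phi(s) - {\boldsymbol I}_d + z\int_0^s e^{-zu}{\boldsymbol g}_\phi(u)\,\dd u$. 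Using $\hat{{\boldsymbol \phi}}(z) - z{\boldsymbol I}_d = -{\boldsymbol h}_\phi(z)$, we arrive at
\begin{equation*}
\mathcal{L}\{\Psi;z\} = -{\boldsymbol h}_\phi(z)\int_0^s e^{-zu}{\boldsymbol g}_\phi(u)\,\dd u - e^{-zs}{\boldsymbol g}_\phi(s) + {\boldsymbol I}_d.
\end{equation*}
Multiplying by ${\boldsymbol h}_\phi(z)^{-1}$ on the left and adding the first-term contribution $e^{-zs}{\boldsymbol h}_\phi(z)^{-1}{\boldsymbol g}_\phi(s)$ from the shift, the two $e^{-zs}{\boldsymbol h}_\phi(z)^{-1}{\boldsymbol g}_\phi(s)$ pieces cancel, leaving ${\boldsymbol h}_\phi(z)^{-1} - \int_0^s e^{-zu}{\boldsymbol g}_\phi(u)\,\dd u$, which matches the LHS transform. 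Since both sides of the claimed identity are continuous in $t$, uniqueness of Laplace transforms yields the identity for all $t > s$.

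The main obstacle is the careful bookkeeping of integrability and of the non-commutative matrix-product order when applying the convolution theorem---specifically, $\mathcal{L}\{{\boldsymbol \mu}\ast{\boldsymbol f}\} = \hat{{\boldsymbol \mu}}\,\hat{{\boldsymbol f}}$ rather than $\hat{{\boldsymbol f}}\,\hat{{\boldsymbol \mu}}$, and likewise for the classical convolution---together with verifying that ${\boldsymbol g}_\phi$ lies in $L^1([0,\infty);\mathbb{R}^{d\times d})$ with enough exponential decay that every transform above converges in a common right half-plane. Once these technicalities are settled, the computation is purely algebraic.
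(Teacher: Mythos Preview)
Your proposal is correct and follows essentially the same Laplace-transform route as the paper: both arguments start from the Volterra equation for ${\boldsymbol g}_\phi$, pass to Laplace transforms with $s$ fixed, exploit $\mathcal{L}[{\boldsymbol g}_\phi]={\boldsymbol h}_\phi^{-1}$ together with $\hat{{\boldsymbol \phi}}(z)-z{\boldsymbol I}_d=-{\boldsymbol h}_\phi(z)$, and conclude by uniqueness. The only organizational difference is that the paper manipulates $\mathcal{L}[{\boldsymbol g}_\phi\mathds{1}_{(s,\infty)}]$ directly into the form ${\boldsymbol h}_\phi(z)\,\mathcal{L}[{\boldsymbol g}_\phi\mathds{1}_{(s,\infty)}]=\mathcal{L}[\delta_s]{\boldsymbol g}_\phi(s)+\mathcal{L}[({\boldsymbol \phi}\ast({\boldsymbol g}_\phi\mathds{1}_{[0,s]}))\mathds{1}_{(s,\infty)}]$ and then left-multiplies by ${\boldsymbol h}_\phi^{-1}$, whereas you compute the transforms of the two sides separately and match them; these are two presentations of the same computation.
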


\begin{proof}
	To lighten notation, and in line with Remark~\ref{diBruno}, we denote the Laplace transform by $\mathcal{L}$. Specifically, for a given signed measure $\mu$ and an integrable function $f\colon [0,\infty)\to \mathbb{R}$, set $\mathcal{L}[\mu](z) = \mathcal{L}[\mu (\dd t)](z) \coloneqq \int_{[0,\infty)}e^{-zt}\,  \mu (\dd t)$ for $z\in \mathbb{C}_+$ and $\mathcal{L}[f] \coloneqq \mathcal{L}[f(t)\, \dd t]$. From \cite[Proposition~5.1]{basse2019multivariate} it follows that
	\begin{equation}\label{step1}
	{\boldsymbol g}_\phi (t)  = {\boldsymbol g}_\phi (s) + \int_s^t ({\boldsymbol g}_\phi \ast {\boldsymbol \phi}) (u)\, \dd u,\qquad t>s\geq 0,
	\end{equation}
	where $({\boldsymbol g}_\phi \ast {\boldsymbol \phi})_{jk} \coloneqq \sum_{l=1}^d \int_{[0,\infty)} g_{\phi ,jl} (\:\cdot\: -t)\, \phi_{lk} (\dd t)$. Since $\mathcal{L}[{\boldsymbol g}_\phi](z) = {\boldsymbol h}_\phi (z)^{-1}$ by \eqref{gCharacterize}, it is easy to check that $\mathcal{L}[{\boldsymbol g}_\phi]$ and $\mathcal{L}[{\boldsymbol \phi}]$ commute, and hence
	\begin{equation}\label{step2}
	{\boldsymbol g}_\phi \ast {\boldsymbol \phi} = {\boldsymbol \phi}\ast {\boldsymbol g}_\phi.
	\end{equation}
	For a fixed $s\in [0,\infty)$, it thus follows from \eqref{step1} and \eqref{step2} that
	\begin{align*}
	\MoveEqLeft\mathcal{L}[{\boldsymbol g}_\phi \mathds{1}_{(s,\infty)}](z) \\
	&= \mathcal{L}[\mathds{1}_{(s,\infty)}](z){\boldsymbol g}_\phi (s)
	+ \mathcal{L}\Bigl[\int_0^{\: \cdot \:} ({\boldsymbol \phi}\ast {\boldsymbol g}_\phi)(u) \mathds{1}_{(s,\infty)}(u)\, \dd u \Bigr] (z)\\
	&=\frac{1}{z}\bigl(\mathcal{L}[\delta_s](z) {\boldsymbol g}_\phi (s) + \mathcal{L}[({\boldsymbol \phi}\ast ({\boldsymbol g}_\phi\mathds{1}_{[0,s]}))\mathds{1}_{(s,\infty)}](z) + \mathcal{L}[{\boldsymbol \phi}](z) \mathcal{L}[{\boldsymbol g}_\phi \mathds{1}_{(s,\infty)}](z)\bigr)
	\end{align*}
	for $z\in \mathbb{C}$ with $\Re (z)>0$. After rearranging terms we obtain
	\begin{equation}\label{intermedEq}
	{\boldsymbol h}_\phi (z) \mathcal{L}[{\boldsymbol g}_\phi \mathds{1}_{(s,\infty)}](z) = \mathcal{L}[\delta_s](z) {\boldsymbol g}_\phi (s) + \mathcal{L}[({\boldsymbol \phi}\ast ({\boldsymbol g}_\phi\mathds{1}_{[0,s]}))\mathds{1}_{(s,\infty)}](z).
	\end{equation}
	By multiplying $\mathcal{L}[{\boldsymbol g}_\phi](z)$ from the left on both sides of \eqref{intermedEq} and using uniqueness of the Laplace transform,
	this shows that
	\begin{align*}
	{\boldsymbol g}_\phi (t) &= ({\boldsymbol g}_\phi \ast \delta_s)(t) {\boldsymbol g}_\phi (s)
	+ ({\boldsymbol g}_\phi \ast ({\boldsymbol \phi}\ast ({\boldsymbol g}_\phi \mathds{1}_{[0,s]})))(t) \\
	&={\boldsymbol g}_\phi (t-s){\boldsymbol g}_\phi (s) + \int_s^t {\boldsymbol g}_\phi (t-u) ({\boldsymbol \phi}\ast ({\boldsymbol g}_\phi \mathds{1}_{[0,s]}))(u)\, \dd u
	\end{align*}
	for $t>s$, and this completes the proof.
\end{proof}

\noindent The following lemma, which will be used in the proof of Theorem~\ref{NonNegDelayMulti} as well, presents a key property of $M$-matrices. While the result is well-known, we have not been able to find a proper reference, and hence we include a small proof of the statement here. 
\begin{lemma}\label{NonNegMatExp}
	Let ${\boldsymbol A}\in \mathbb{R}^{d \times d}$. If ${\boldsymbol A}$ is an $M$-matrix, then $e^{-{\boldsymbol A} t}$ has non-negative entries for all $t\geq 0$. Conversely, if $e^{-{\boldsymbol A}t}$ has non-negative entries for all $t\geq 0$ and the spectrum $\sigma ({\boldsymbol A})$ of ${\boldsymbol A}$ belongs to $\{z\in \mathbb{C}\, :\, \Re (z) >0\}$, then ${\boldsymbol A}$ is an invertible $M$-matrix.
\end{lemma}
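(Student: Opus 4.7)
The plan is to prove the two implications separately, using the decomposition ${\boldsymbol A} = \alpha {\boldsymbol I}_d - {\boldsymbol B}$ to translate between the matrix exponential and spectral data.

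For the forward direction, suppose ${\boldsymbol A} = \alpha {\boldsymbol I}_d - {\boldsymbol B}$ with ${\boldsymbol B}$ having non-negative entries. Since $\alpha {\boldsymbol I}_d$ commutes with ${\boldsymbol B}$, I would write
\begin{equation*}
e^{-{\boldsymbol A}t} = e^{-\alpha t}\,e^{{\boldsymbol B}t} = e^{-\alpha t}\sum_{n=0}^{\infty}\frac{t^n {\boldsymbol B}^n}{n!},
\end{equation*}
and observe that products and sums of entrywise non-negative matrices remain entrywise non-negative. Thus the right-hand side has non-negative entries term by term, so $e^{-{\boldsymbol A}t}$ does as well (the hypothesis $\rho({\boldsymbol B}) \leq \alpha$ is not used here).

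For the converse, assume $e^{-{\boldsymbol A}t}$ has non-negative entries for all $t\geq 0$ and $\sigma({\boldsymbol A})\subseteq \{\Re(z)>0\}$. Invertibility is immediate since $0\notin \sigma({\boldsymbol A})$. Pick any $\alpha \geq \max_j A_{jj}$ and set ${\boldsymbol B} := \alpha {\boldsymbol I}_d - {\boldsymbol A}$; the task is to verify that ${\boldsymbol B}$ has non-negative entries and that $\rho({\boldsymbol B}) \leq \alpha$. Diagonally, $B_{jj} = \alpha - A_{jj}\geq 0$ by the choice of $\alpha$. Off-diagonally, I would exploit the Taylor expansion
\begin{equation*}
(e^{-{\boldsymbol A}t})_{jk} = -A_{jk}\,t + O(t^2),\qquad j\neq k,
\end{equation*}
together with the fact that $(e^{-{\boldsymbol A}\cdot 0})_{jk} = 0$ for $j\neq k$: non-negativity of the left-hand side for small $t>0$ forces $-A_{jk}\geq 0$, so $B_{jk} = -A_{jk}\geq 0$.

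To bound the spectral radius, I would invoke the Perron--Frobenius theorem: since ${\boldsymbol B}$ has non-negative entries, $\rho({\boldsymbol B})$ is itself an eigenvalue of ${\boldsymbol B}$. Consequently $\alpha - \rho({\boldsymbol B}) \in \sigma({\boldsymbol A})$, and being real it must satisfy $\alpha - \rho({\boldsymbol B})>0$ by the spectral assumption, giving $\rho({\boldsymbol B})<\alpha$ as required. The main subtlety in this argument is the off-diagonal sign deduction via the first-order Taylor coefficient and the Perron--Frobenius step that pins $\rho({\boldsymbol B})$ down as a genuine eigenvalue; once those two observations are in place the $M$-matrix decomposition assembles routinely.
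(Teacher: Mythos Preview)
Your forward direction is essentially identical to the paper's. For the converse, however, you take a genuinely different route. The paper does not construct the decomposition ${\boldsymbol A}=\alpha{\boldsymbol I}_d-{\boldsymbol B}$ by hand; instead it observes that the spectral hypothesis $\sigma({\boldsymbol A})\subseteq\{\Re(z)>0\}$ makes the Laplace integral
\[
\int_0^\infty e^{-xt}e^{-{\boldsymbol A}t}\,\dd t=(x{\boldsymbol I}_d+{\boldsymbol A})^{-1}
\]
convergent with an entrywise non-negative integrand, so that $(x{\boldsymbol I}_d+{\boldsymbol A})^{-1}$ has non-negative entries for every $x\geq 0$, and then invokes a known $M$-matrix characterization from \cite{plemmons1977m} to conclude. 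Your argument is more self-contained: the first-order Taylor coefficient pins down the $Z$-matrix sign pattern of ${\boldsymbol A}$ directly, and Perron--Frobenius converts the spectral hypothesis into the bound $\rho({\boldsymbol B})<\alpha$ without appealing to an external reference. The paper's version is shorter and exposes the resolvent link (which is thematically natural given how Laplace transforms are used elsewhere in the proofs), but it outsources the structural work to the cited characterization; your version trades brevity for transparency.
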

\begin{proof}
	If ${\boldsymbol A}$ is an $M$-matrix, we have that $e^{-{\boldsymbol A}t} = e^{-\alpha t}e^{{\boldsymbol B}t}$ for some $\alpha \in [0,\infty)$ and ${\boldsymbol B}\in \mathbb{R}^{d\times d}$ with non-negative entries. Since $e^{{\boldsymbol B}t}$ has non-negative entries, we conclude that the same holds for $e^{-{\boldsymbol A}t}$. If instead the entries of $e^{-{\boldsymbol A}t}$ are non-negative and $\sigma ({\boldsymbol A})\subseteq \{z \in \mathbb{C}\, :\, \Re (z)>0\}$, it follows that the entries of
	\begin{equation*}
	\int_0^\infty e^{-xt}e^{-{\boldsymbol A}t}\, \dd t = (x{\boldsymbol I}_d + {\boldsymbol A})^{-1}
	\end{equation*}
	are non-negative for all $x\in [0,\infty)$ as well. By \cite[Theorem~2]{plemmons1977m} this implies that ${\boldsymbol A}$ is an invertible $M$-matrix.
\end{proof}

\begin{proof}[Proof of Theorem~\ref{NonNegDelayMulti}]
	For $\varepsilon \in (0,\infty)$ define the measure ${\boldsymbol \phi}_\varepsilon \coloneqq -{\boldsymbol \lambda}\delta_0 + {\boldsymbol \eta}(\: \cdot \: \cap (\varepsilon,\infty))$. We start by observing that, when $\varepsilon$ is sufficiently small, 
	\begin{equation}\label{smallEps}
	\det ({\boldsymbol h}_{\phi_\varepsilon} (z)) \neq 0\qquad \text{for all $z\in \mathbb{C}_+$}.
	\end{equation}
	 To see this suppose, for the sake contradiction, there exist sequences $(\varepsilon_n)_{n \geq 1}\subseteq (0,\infty)$ and $(z_n)_{n \geq 1}\subseteq \mathbb{C}_+$ such that $\det ({\boldsymbol h}_{\phi_{\varepsilon_n}} (z_n)) = 0$ for all $n \geq 1$ and $\varepsilon_n\to 0$ as $n \to \infty$. Note that the length of any each entry of $\mathcal{L}[{\boldsymbol \phi}_{\varepsilon}](z)$ is bounded by the constant
	\begin{equation*}
	\max_{j,k=1,\dots, d} \vert \phi_{jk}\vert ([0,\infty)),
	\end{equation*}
	for all $\varepsilon$ and $z$, and hence $\inf_\varepsilon \vert \det ({\boldsymbol h}_{\phi_{\varepsilon}}(z))\vert \sim \vert z\vert^d$ as $\vert z \vert \to \infty$ by the Leibniz formula (the notation $\sim$ means that the ratio tends to one). This shows in particular that the sequence $(z_n)_{n\geq 1}$ must be bounded, and so it has a subsequence $(z_{n_k})_{k\geq 1}$ which converges to a point $z^\ast\in \mathbb{C}_+$. However, this would imply that
	\begin{equation*}
	 \det ({\boldsymbol h}_\phi (z^\ast)) = \lim_{k\to \infty} \det ({\boldsymbol h}_{\phi_{\varepsilon_{n_k}}} (z_{n_k})) = 0,
	\end{equation*}
	which contradicts the original assumption that $\det ({\boldsymbol h}_\phi (z))\neq 0$ for all $z\in \mathbb{C}_+$. Consequently, the condition \eqref{smallEps} is satisfied as long as $\varepsilon$ is smaller than a certain threshold, say, $\varepsilon^\ast$. Thus, for $\varepsilon \leq \varepsilon^\ast$ we can define the corresponding function ${\boldsymbol g}_{\phi_\varepsilon}\colon [0,\infty)\to \mathbb{R}^{d\times d}$ through \eqref{gCharacterize} (with ${\boldsymbol\phi}$ replaced by ${\boldsymbol\phi}_{\varepsilon}$). By \eqref{step1} and \eqref{step2}, 
	\begin{align*}
	{\boldsymbol g}_{\phi_\varepsilon}(t) &= {\boldsymbol I}_d + \int_0^t ({\boldsymbol \phi}_\varepsilon\ast{\boldsymbol g}_{\phi_\varepsilon})(s)\, \dd s 
	\\
	&= {\boldsymbol I}_d - {\boldsymbol \lambda}\int_0^t {\boldsymbol g}_{\phi_\varepsilon} (s)\, \dd s
	\end{align*}
	for $t \in [0,\varepsilon]$. By uniqueness of solutions of such differential equation it follows that for ${\boldsymbol g}_{\phi_\varepsilon} (t) = e^{-{\boldsymbol \lambda}t}$ for $t\in [0,\varepsilon]$, in which case we can rely on Lemma~\ref{NonNegMatExp} to deduce that its entries are non-negative. Now, given that ${\boldsymbol g}_{\phi_\varepsilon} (t)$ has non-negative entries for $t\in [0,k\varepsilon]$ for some positive integer $k \geq 1$, we can apply Lemma~\ref{induction} with $s= k\varepsilon$ to deduce that this remains true when $t\in (k\varepsilon,(k+1)\varepsilon]$. Consequently, it follows by induction that ${\boldsymbol g}_{\phi_\varepsilon}(t)$ has non-negative entries for all $t\in [0,\infty)$. 
	
	Next, for $\varepsilon \in [0,\varepsilon^\ast]$ define the Fourier transform
	\begin{equation*}
	\mathcal{F}_{\phi_\varepsilon, jk} (y) \coloneqq ({\boldsymbol h}_{\phi_\varepsilon}(iy)^{-1})_{jk},\qquad y \in \mathbb{R},
	\end{equation*}
	of the $(j,k)$-th entry $g_{\phi_\varepsilon, jk}$ of ${\boldsymbol g}_{\phi_\varepsilon}$. By following the same arguments as in the proof of \cite[Proposition~5.1]{basse2019multivariate} (in particular, relying on Cramer's rule), one can show that
	\begin{equation}\label{domConv}
	\sup_{0\leq \varepsilon \leq \varepsilon^\ast}\vert \mathcal{F}_{\phi_\varepsilon, jk} (y)\vert \leq C (1\wedge \vert y \vert^{-1}),\qquad y \in \mathbb{R}.
	\end{equation}
	for a suitable constant $C\in (0,\infty)$. Since $\mathcal{F}_{\phi_\varepsilon, jk}$ converges pointwise to $\mathcal{F}_{\phi_0,jk}$ (the Fourier transform of the $(j,k)$-th entry $g_{\phi, jk}$ of ${\boldsymbol g}_\phi$) as $\varepsilon \to 0$, it follows by the Plancherel theorem and \eqref{domConv} that
	\begin{equation*}
	\int_0^\infty (g_{\phi_\varepsilon, jk}(t)-g_{\phi, jk}(t))^2\, \dd t = \int_\mathbb{R} \vert\mathcal{F}_{\phi_\varepsilon, jk}(y)-\mathcal{F}_{\phi_0,jk}(y) \vert^2\, \dd y \to 0,\qquad \varepsilon \to 0.
	\end{equation*}
	From this we establish that $g_{\phi_{\varepsilon_n},jk}\to g_{\phi,jk}$ almost everywhere for a suitable sequence $(\varepsilon_n)_{n \geq 1}\subseteq (0,\varepsilon^\ast]$ with $\varepsilon_n \to 0$ as $n \to \infty$. This shows that $g_{\phi,jk} (t) \geq 0$ for almost all $t\in [0,\infty)$ and, thus, completes the proof.
\end{proof}

\subsection*{Acknowledgments}
This work was supported by the Danish Council for Independent Research (grants 4002-00003 and 9056-00011B).

\bibliographystyle{chicago}

\end{document}